\newcommand{\parenth}[1]{\ensuremath{\left( #1 \right)}}
\newcommand{\refeqn}[1]{(\ref{eqn:#1})}
\newcommand{\SO}{\ensuremath{\mathsf{SO(3)}}}
\newcommand{\T}{\ensuremath{\mathsf{T}}}
\newcommand{\so}{\ensuremath{\mathfrak{so}(3)}}
\renewcommand{\Re}{\ensuremath{\mathbb{R}}}
\newcommand{\Sph}{\ensuremath{\mathsf{S}}}
\newcommand{\D}{\ensuremath{\mathbf{D}}}
\title{\LARGE \bf
Dynamics and Control of a Chain Pendulum on a Cart}
\author{Taeyoung Lee\authorrefmark{1}, Melvin Leok\authorrefmark{2}, and N. Harris McClamroch%
\thanks{Taeyoung Lee, Mechanical and Aerospace Engineering, George Washington University, Washington DC 20052 {\tt tylee@gwu.edu}}
\thanks{Melvin Leok, Mathematics, University of California at San Diego, La Jolla, CA 92093 {\tt mleok@math.ucsd.edu}}%
\thanks{N. Harris McClamroch, Aerospace Engineering, University of Michigan, Ann Arbor, MI 48109 {\tt
nhm@umich.edu}}%
\thanks{\textsuperscript{\footnotesize\ensuremath{*}}This research has been supported in part by NSF under grants CMMI-1243000 (transferred from 1029551).}
\thanks{\textsuperscript{\footnotesize\ensuremath{\dagger}}This research has been supported in part by NSF under grants DMS-0726263, DMS-1001521, DMS-1010687, and CMMI-1029445.}
}
\newtheorem{prop}{Proposition}
\begin{document}
\allowdisplaybreaks
\maketitle \thispagestyle{empty} \pagestyle{empty}

\begin{abstract}
A geometric form of Euler-Lagrange equations is developed for a chain pendulum, a serial connection of $n$ rigid links connected by spherical joints, that is attached to a rigid cart.   The cart can translate in a horizontal plane acted on by a horizontal control force while the chain pendulum can undergo complex motion in 3D due to gravity.   The configuration of the system is in $(\Sph^2)^n \times \Re^2$.   We examine the rich structure of the uncontrolled system dynamics: the equilibria of the system correspond to any one of $2^n$ different chain pendulum configurations and any cart location. A linearization about each equilibrium, and the corresponding controllability criterion is provided. We also show that any equilibrium can be asymptotically stabilized by using a proportional-derivative type controller, and we provide a few numerical examples.
\end{abstract}

\section{Introduction}

Pendulum models have been a rich source of examples in nonlinear dynamics and control~\cite{FurP4ICDC03,MorNisIJC76}. For example, the dynamics of a double spherical pendulum and a Lagrange top have been studied in~\cite{BenSanDDNS06,MarSchZ93,CusMeeGSM90}. Generalized models, such as a 3D pendulum~\cite{ChaLeeJNS11} or a 3D pendulum attached to an elastic string~\cite{LeeLeoND11}, have been considered. A variety of control techniques have been applied, such as passivity-based approaches~\cite{ShiPogIJRNC00,SpoPIWC96}, swing-up strategies~\cite{ShiLudA04,AstFurA00}, Lyapunov-based method~\cite{GutAguAJC09}, controlled-Lagrangian~\cite{BloChaITAC01}, and hybrid control systems~\cite{ZhaSpoA01}. In particular, stabilization of a triple inverted pendulum has been studied in~\cite{HosKawA00,FurOchIJC84}.

In this paper, we consider the dynamics and control of a chain pendulum on a cart, that is a serial connection of $n$ rigid links, connected by a spherical joint, attached to a cart that moves on a horizontal plane. The configuration manifold of this system is $(\Sph^2)^n \times \Re^2$, where the manifold of unit vectors in $\Re^3$ is denoted by the two-sphere $\Sph^2$. 

Many interesting mechanical systems, such as robotic manipulators or variations of pendulum models, evolve on the two-sphere $\Sph^2$ or on products of two-spheres $(\Sph^2)^n$.  In most of the literature that treats dynamic systems on $(\Sph^2)^n$, including many of the above references, either $2n$ spherical polar angles or $n$ explicit equality constraints that enforce unit lengths are used to describe the configuration of the system. These descriptions necessarily involve complicated trigonometric expressions and introduce additional complexity in analysis and computations, as well as singularities.  

We demonstrate that globally valid Euler-Lagrange equations on $(\Sph^2)^n \times \Re^2$ can be developed for the chain pendulum on a cart system, and they can be analyzed in a compact form without local parameterization or constraints. This also leads a coordinate-free form of linearized equations, controllability criteria, and control systems. The main contribution of this paper is providing an intrinsic and unified framework to study dynamics and control of chain pendulum on a cart systems, that is uniformly applicable for an arbitrary number of links, and globally valid for any configuration of the links.



\section{Lagrangian Dynamics of the Chain Pendulum on a Cart System}\label{sec:EL}


\subsection{Background}



\setlength{\unitlength}{0.1\columnwidth}
\begin{figure}
\footnotesize\selectfont
\centerline{
\begin{picture}(4.3,5.2)(0,0)
\put(0,0){\includegraphics[width=0.43\columnwidth]{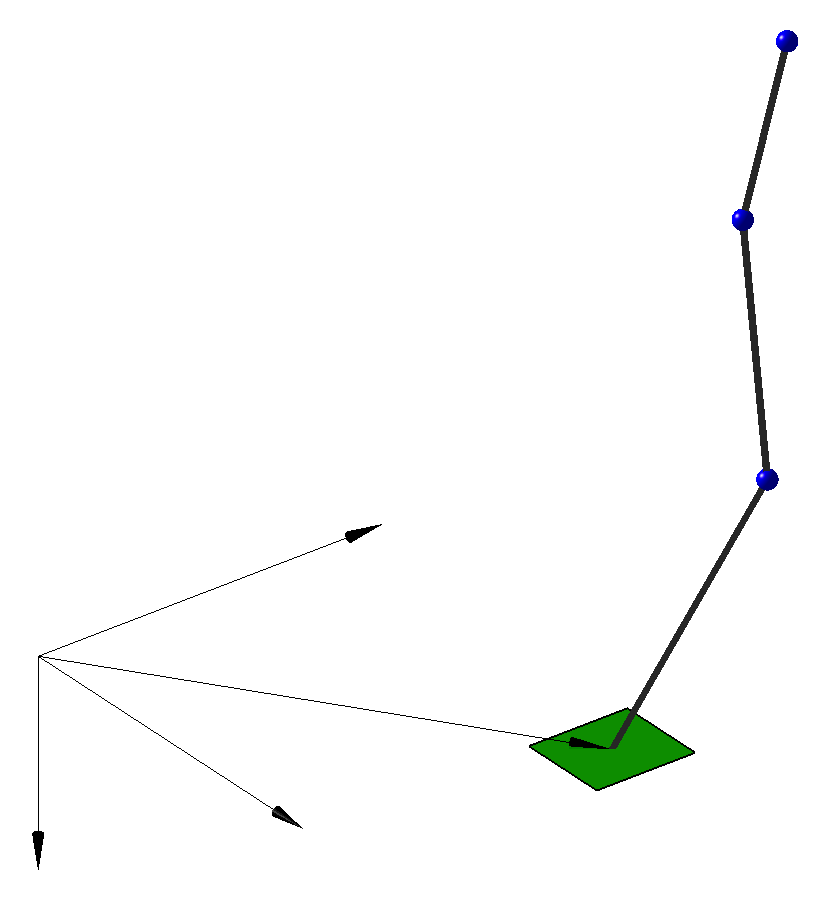}}
\put(1.8,2.1){$e_1$}
\put(1.4,0.15){$e_2$}
\put(-0.1,0.0){$e_3$}
\put(1.8,1.1){$x$}
\put(2.7,0.5){$m$}
\put(3.7,1.3){$l_1q_1$}
\put(4.1,2.13){$m_1$}
\put(3.3,2.8){$l_2q_2$}
\put(3.35,3.50){$m_2$}
\put(4.08,3.95){$l_3q_3$}
\put(4.2,4.5){$m_3$}
\put(3.2,0.82){\vector(2,-1){1.0}}
\put(4.25,0.2){$u$}
\end{picture}}
\caption{Chain pendulum on a cart ($n=3$)}
\end{figure}

The cart of mass $m$ can translate on a horizontal plane, and its position in an inertial frame is denoted by $x\in\Re^2$. It is acted on by a horizontal control force $u\in\Re^2$.   A serial connection of $n$ rigid links, connected by spherical joints, is attached to the cart, where the mass of the $i$-th link is denoted by $m_i$ and the link length is denoted by $l_i$.  For simplicity, we assume that the mass of each link is concentrated at the outboard end of the link.    The direction vector of each link in an inertial frame is given by $q_i\in \Sph^2=\{q\in\Re^3\,|\, \|q\|=1\}$ for $i=1,\ldots,n$. The configuration manifold of this chain pendulum on a cart system is $(\Sph^2)^n\times\Re^2$.

The inertial frame is defined by the unit vectors $e_1=[1;\,0;\,0]\in\Re^3$, $e_2=[0;\,1;\,0]\in\Re^3$, $e_3=[0;\,0;\,1]\in\Re^3$; we assume that $e_3$ is in the direction of gravity.   Define $C=[e_1,\,e_2]\in\Re^{3\times 2}$. 

The kinematic equation for the direction vector of the $i$-th link $q_i$ is given by
\begin{align}
\dot q_i = \omega_i \times q_i=\hat\omega_i q_i,\label{eqn:dotqi}
\end{align}
where $\omega_i\in\Re^3$ is the angular velocity of the $i$-th link satisfying $\omega_i\cdot q_i=0$.

The \textit{hat} map $\wedge :\Re^{3}\rightarrow\so$ transforms a vector in $\Re^3$ to a $3\times 3$ skew-symmetric matrix, and is uniquely defined by the property that $\hat x y = x\times y$ for any $x,y\in\Re^3$. 
The inverse of the hat map is denoted by the \textit{vee} map $\vee:\so\rightarrow\Re^3$. 

Throughout this paper, the dot product of two vectors is denoted by $x\cdot y = x^T y$ for any $x,y\in\Re^n$. The $n\times n$ identity matrix is denoted by $I_n$. The $n\times m$ by matrix composed of zero elements is denoted by $0_{n\times m}$, and it is written as $0_n$ if $n=m$. A column-wise stack of matrices is written as $[A;B]=[A^T, B^T]^T\in\Re^{(a+b)\times n}$ for $A\in\Re^{a\times n}$ and $B\in\Re^{b\times n}$. Some properties of the hat map are given by
\begin{gather}
    \hat x y = x\times y = - y\times x = - \hat y x,\\
    x\cdot \hat y z = y\cdot \hat z x = z\cdot \hat x y\label{eqn:STP},\\
    \hat x\hat y z = (x\cdot z) y - (x\cdot y ) z,\label{eqn:VTP}\\
    C^TC=-C^T\hat e_3^2 C = I_2
\end{gather}
for any $x,y,z\in\Re^3$.

\subsection{Lagrangian}

The location of the cart is given by $Cx\in\Re^3$ in the inertial frame. Let $x_i\in\Re^3$ be the position of the outboard end of the $i$-th link in the inertial frame. It can be written as
\begin{align}
x_i &= Cx + \sum_{a=1}^i l_aq_a.\label{eqn:xi}
\end{align}
The total kinetic energy is composed of the kinetic energy of the cart and the kinetic energy of each mass:
\begin{align}
T 
& = \frac{1}{2} m\|\dot x\|^2 
+\frac{1}{2}\sum_{i=1}^n m_i \| C\dot x + \sum_{a=1}^i l_a\dot q_a\|^2.\label{eqn:T0}
\end{align}

For simplicity, we first consider the part of the kinetic energy, namely $T_x$ that is dependent on the motion of the cart. The part of \refeqn{T0} dependent on $\dot x$ is given by
\begin{align*}
T_x 
& = \frac{1}{2} (m + \sum_{i=1}^n m_i)\|\dot x\|^2 + C\dot x \cdot \sum_{i=1}^n\sum_{a=i}^n m_a l_i\dot q_i.
\end{align*}
This can be written as
\begin{align}
T_x = \frac{1}{2} M_{00}  \|\dot x\|^2 + \dot x \cdot \sum_{i=1}^n M_{0i} \dot q_i,\label{eqn:T1}
\end{align}
where the inertia matrices $M_{00}\in\Re$, $M_{0i}\in\Re^{2\times 3}$, and $M_{i0}\in\Re^{3\times 2}$ are given by
\begin{align}
M_{00} = m + \sum_{i=1}^n m_i,\quad M_{0i} = C^T \sum_{a=i}^n m_a l_i,\quad M_{i0} = M_{0i}^T\label{eqn:M0}
\end{align}
for $i=1,\ldots,n$. 
The part of the kinetic energy, namely $T_q$, that is independent of $\dot x$ is given by
\begin{align}
T_q & = \frac{1}{2}\sum_{i=1}^n m_i \| \sum_{a=1}^i l_a\dot q_a\|^2
 = \frac{1}{2} \sum_{i,j=1}^n M_{ij} \dot q_i \cdot \dot q_j,\label{eqn:T2}
\end{align}
where the inertia constants $M_{ij}\in\Re$ are given by
\begin{align}
M_{ij} =  \parenth{\sum_{a=\max\{i,j\}}^n m_a} l_il_j\label{eqn:Mij}
\end{align}
for $i,j=1,\ldots,n$. 

From \refeqn{T1} and \refeqn{T2}, the total kinetic energy is given by
\begin{align}
T=\frac{1}{2} M_{00}  \|\dot x\|^2 + \dot x \cdot \sum_{i=1}^n M_{0i}\dot q_i +\frac{1}{2} \sum_{i,j=1}^n M_{ij} \dot q_i \cdot \dot q_j,\label{eqn:T}
\end{align}
where the inertia matrices are given by \refeqn{M0}, \refeqn{Mij}. The gravitational potential energy is composed of the gravitational potential energy of each mass. From \refeqn{xi}, it can be written as
\begin{align}
V = -\sum_{i=1}^n m_i g x_i \cdot e_3 = - \sum_{i=1}^n \sum_{a=i}^n m_ag l_i e_3\cdot q_i.\label{eqn:V}
\end{align}
The Lagrangian of a chain pendulum on a cart is $L=T-V$.  

\subsection{Euler-Lagrange equations}


In~\cite{LeeLeoIJNME08}, it is shown that a coordinate-free form of Euler-Lagrange equations on the two-spheres can be derived from Hamilton's variational principle. The key idea is to express the variation of a curve on the two-sphere in terms of the exponential map on \SO. Let $q_i(t)$ be a curve on $\Sph^2$. Its variation can be written as
\begin{align*}
q^\epsilon_i(t) = \exp(\epsilon\xi_i(t)) q_i(t),
\end{align*}
where $\xi_i(t)$ is a curve in $\Re^3$ satisfying $q_i(t)\cdot \xi_i(t)=0$ for all $t$. As the exponential map represents the rotation of $q_i(t)$ about the axis $\xi(t)$ by the angle $|\epsilon|\|\xi_i(t)\|$ at each $t$, it is guaranteed that the varied curve $q^\epsilon_i(t)$ lies in $\Sph^2$. The corresponding infinitesimal variation is given by
\begin{align}
\delta q_i(t) = \frac{d}{d\epsilon}\bigg|_{\epsilon=0} \exp(\epsilon\xi_i(t)) q_i(t) = \xi_i(t)\times q_i(t),\label{eqn:delqi}
\end{align}
which lies in the tangent space $\T_{q_i(t)}\Sph^2$ as it is perpendicular to $q_i(t)$ at each $t$. Using these, we obtain the Euler-Lagrange equations of a chain pendulum on a cart as follows.

\begin{prop}\label{prop:EL}
Consider a chain pendulum on a cart, whose Lagrangian is given by \refeqn{T} and \refeqn{V}. The Euler-Lagrange equation on $(\Sph^2)^n\times\Re^2$ are as follows:
\begin{align}
&\begin{bmatrix}%
    M_{00} & M_{01} & M_{02} & \cdots & M_{0n} \\
    -\hat q_1^2 M_{10} & M_{11}I_{3} & -M_{12} \hat q_1^2 & \cdots & -M_{1n}\hat q_1^2\\%
    -\hat q_2^2 M_{20} & -M_{21} \hat q_2^2 & M_{22} I_{3} & \cdots & -M_{2n} \hat q_2^2\\%
    \vdots & \vdots & \vdots & & \vdots\\
    -\hat q_n^2 M_{n0} & -M_{n1} \hat q_n^2 & -M_{n2}\hat q_n^2 & \cdots & M_{nn} I_{3}
    \end{bmatrix}%
    \begin{bmatrix}
    \ddot x \\ \ddot q_1 \\ \ddot q_2 \\ \vdots \\ \ddot q_n
    \end{bmatrix}\nonumber\\
 &\qquad\quad=   \begin{bmatrix}
    u\\
    -\|\dot q_1\|^2M_{11} q_1 -\sum_{a=1}^n m_a gl_1\hat q_1^2 e_3\\
    -\|\dot q_2\|^2M_{22} q_2 -\sum_{a=2}^n m_a gl_2\hat q_2^2 e_3\\
    \vdots\\
    -\|\dot q_n\|^2M_{nn} q_n - m_n gl_n\hat q_n^2 e_3
    \end{bmatrix},\label{eqn:ELm}
\end{align}
where the inertia matrices are given by \refeqn{M0} and \refeqn{Mij}.

Or equivalently, it can be written in terms of the angular velocities as
\begin{gather}
\begin{bmatrix}%
    M_{00} & -M_{01}\hat q_1 & -M_{02}\hat q_2 & \cdots & -M_{0n}\hat q_n\\
    \hat q_1 M_{10} & M_{11}I_{3} & -M_{12} \hat q_1 \hat q_2 & \cdots & -M_{1n}\hat q_1 \hat q_n\\%
    \hat q_2 M_{20} &-M_{21} \hat q_2\hat q_1 & M_{22} I_{3} & \cdots & -M_{2n} \hat q_2 \hat q_n\\%
    \vdots & \vdots & \vdots & & \vdots\\
    \hat q_n M_{n0} &-M_{n1} \hat q_n \hat q_1 & -M_{n2}\hat q_n \hat q_2 & \cdots & M_{nn} I_{3}
    \end{bmatrix}%
    \begin{bmatrix}
    \ddot x \\ \dot \omega_1 \\ \dot \omega_2 \\ \vdots \\ \dot \omega_n
    \end{bmatrix}\nonumber\\
    =
    \begin{bmatrix}
    \sum_{j=1}^n M_{0j} \|\omega_j\|^2 q_j+u\\
    \sum_{j=2}^n M_{1j}\|\omega_j\|^2\hat q_1 q_j +\sum_{a=1}^n m_a gl_1\hat q_1 e_3\\
    \sum_{j=1,j\neq 2}^n M_{2j}\|\omega_j\|^2\hat q_2 q_j +\sum_{a=2}^n m_a gl_2\hat q_2 e_3\\
    \vdots\\
    \sum_{j=1}^{n-1} M_{nj}\|\omega_j\|^2\hat q_n q_j + m_n gl_n\hat q_n e_3\\
    \end{bmatrix},\label{eqn:ELwm}\\
\dot q_i = \omega_i\times q_i.\label{eqn:ELwm2}
\end{gather}
\end{prop}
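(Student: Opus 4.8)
The plan is to apply the Lagrange--d'Alembert principle, which for the external force $u$ acting on the cart reads $\delta\int_0^T L\,dt + \int_0^T u\cdot\delta x\,dt = 0$, where the admissible variations are a free $\delta x$ in $\Re^2$ and the constrained variations $\delta q_i=\xi_i\times q_i=\hat\xi_i q_i$ of \refeqn{delqi} with $\xi_i\cdot q_i=0$. Because $L=T-V$ with $T$ from \refeqn{T} depending only on $\dot x$ and the $\dot q_i$, and $V$ from \refeqn{V} depending only on the $q_i$, the needed derivatives are $\deriv{L}{\dot x}=M_{00}\dot x+\sum_i M_{0i}\dot q_i$, then $\deriv{L}{\dot q_i}=M_{i0}\dot x+\sum_j M_{ij}\dot q_j$ (using $M_{ij}=M_{ji}$), and $\deriv{L}{q_i}=\sum_{a=i}^n m_a g l_i e_3$.

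Next I would form $\delta\int L\,dt$, integrate each $\delta\dot x$ and $\delta\dot q_i$ term by parts in time, and discard the boundary terms since the variations vanish at the endpoints. The free variation $\delta x$ immediately yields the cart equation $M_{00}\ddot x+\sum_i M_{0i}\ddot q_i=u$, i.e.\ the first row of \refeqn{ELm}. For the link variations the integrand becomes $\sum_i B_i\cdot\delta q_i$ with $B_i=\deriv{L}{q_i}-\frac{d}{dt}\deriv{L}{\dot q_i}=\sum_{a=i}^n m_a g l_i e_3 - M_{i0}\ddot x-\sum_j M_{ij}\ddot q_j$. I then rewrite $B_i\cdot\delta q_i=B_i\cdot(\hat\xi_i q_i)=\xi_i\cdot(\hat q_i B_i)$ using the scalar triple product \refeqn{STP}.

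Since $\hat q_i B_i$ is automatically orthogonal to $q_i$ and $\xi_i$ ranges over all vectors orthogonal to $q_i$, the fundamental lemma of the calculus of variations gives $\hat q_i B_i=0$ --- this is already the angular-velocity-ready form. To reach the symmetric block form \refeqn{ELm} I would left-multiply once more by $\hat q_i$ and use $\hat q_i^2=q_iq_i^T-I_3$ together with the twice-differentiated constraint $q_i\cdot\ddot q_i=-\norm{\dot q_i}^2$ (obtained from $\norm{q_i}^2=1$), which converts the diagonal term $M_{ii}\hat q_i^2\ddot q_i$ into $-M_{ii}\ddot q_i-M_{ii}\norm{\dot q_i}^2 q_i$; collecting signs then produces the $M_{ii}I_3$ diagonal block, the off-diagonal $-M_{ij}\hat q_i^2$ blocks, the $-\hat q_i^2 M_{i0}$ coupling to the cart, and the right-hand side $-\norm{\dot q_i}^2 M_{ii}q_i-\sum_{a=i}^n m_a g l_i\hat q_i^2 e_3$.

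For the equivalent form \refeqn{ELwm}, I would substitute the kinematics $\dot q_i=\hat\omega_i q_i$ and its derivative $\ddot q_i=-\hat q_i\dot\omega_i-\norm{\omega_i}^2 q_i$, where the second identity uses $\hat\omega_i^2 q_i=-\norm{\omega_i}^2 q_i$ from $\omega_i\cdot q_i=0$, into the cart equation and into $\hat q_i B_i=0$. The diagonal simplification now rests on the fact that $\omega_i\cdot q_i=0$ forces $\dot\omega_i\cdot q_i=0$ (differentiate and use $\omega_i\cdot\dot q_i=0$), so that $\hat q_i^2\dot\omega_i=-\dot\omega_i$, again yielding the $M_{ii}I_3$ block while the term $M_{ii}\norm{\omega_i}^2\hat q_i q_i$ drops out. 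I expect the main obstacle to be organizational rather than conceptual: correctly separating the $j=i$ diagonal contribution from the $j\neq i$ sums at every step, and tracking signs through the two successive multiplications by $\hat q_i$, so that the gravitational and centrifugal terms land on the correct side with the projection applied consistently.
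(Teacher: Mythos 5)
Your proposal is correct and follows essentially the same route as the paper's proof: the Lagrange--d'Alembert principle with constrained variations $\delta q_i = \xi_i \times q_i$, integration by parts, the scalar triple product to isolate $\xi_i$, and the projection identities together with $q_i\cdot\ddot q_i = -\|\dot q_i\|^2$ and $\dot\omega_i\cdot q_i = 0$ to obtain both forms. The only cosmetic difference is ordering: you extract $\hat q_i B_i = 0$ first and multiply by $\hat q_i$ again to reach \refeqn{ELm}, whereas the paper states $B_i$ is parallel to $q_i$ and applies $-\hat q_i^2$ to get \refeqn{ELm} before converting to \refeqn{ELwm} --- the two are equivalent.
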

\begin{proof}
See Appendix \ref{sec:pfEL}
\end{proof}

\subsection{Remarks about the Chain Pendulum on a Cart System}

The system of equations above for the chain pendulum on a cart system remains valid for arbitrary deformations of the chain with respect to the cart; part or all of the chain pendulum may be above the cart while part or all of the chain pendulum may be below the cart.   We ignore all possible collisions of the chain pendulum with itself and of the chain pendulum with the cart.    

The chain pendulum with the cart inertially fixed is a special case of the model above.   If there is a single link, $n=1$, the spherical pendulum is obtained; if there are two serial links, $n=2$, the double spherical pendulum is obtained.    The resulting dynamics of the chain pendulum for any number of links can be very complex.    The Euler-Lagrange equations for this case are easily obtained as special cases of the prior equations.   The cart is included in the system primarily as a means of achieving control actuation.

The planar chain pendulum on a cart is also a special case of the above model. Let $v\in\Re^3$ be a vector normal to $e_3$, i.e., $v\cdot e_3=0$. Suppose that the initial condition is chosen such that $\omega_i(0)\times v=0$ and $q_i(0)\cdot v=0$ for $i=1,\ldots,n$ and the control input is given such that $v\cdot Cu=0$. From \refeqn{ELwm}, we can show that the motion of links with respect to the cart remains confined to the plane normal to $v$.

In short, equations \refeqn{ELm}, or \refeqn{ELwm} and \refeqn{ELwm2}, provide the Euler-Lagrange equations for a chain pendulum or a chain pendulum on a cart with an arbitrary number of links. As they are developed in a compact, coordinate-free fashion, singularities that arise when using local coordinates are completely avoided.

\section{Dynamic Properties of the Uncontrolled Chain Pendulum on a Cart System}\label{sec:Lin}


\subsection{Equilibrium Configurations}

We can determine the equilibria of the uncontrolled chain pendulum on a cart system using \refeqn{ELwm} when $u=0$.   Assuming the configuration is constant, the equilibrium configurations in  $(\Sph^2)^n \times \Re^2$ are given by the cart in a fixed location and the chain pendulum aligned vertically, i.e., 
\begin{align*}
q_i \times e_3=0,\quad \text{for } i=1,\ldots,n.
\end{align*}

Consequently, assuming that we are interested in the equilibria where $x=0$, there are $2^n$ possible equilibrium configurations that correspond to the case where all $n$ links are vertical: $q_i=\pm e_3$ for $i=1,\ldots ,n$.   The equilibrium for which all links are aligned with the gravity direction, $q_i = e_3$ for $i=1,\ldots ,n$, is referred to as the \textit{hanging equilibrium}; the equilibrium for which all links are aligned opposite to the gravity direction $q_i = -e_3$ for $i=1,\ldots,n$, is referred to as the \textit{inverted equilibrium}.   Two other interesting equilibria correspond to the case that all adjacent links point in opposite directions, that is $q_i \cdot q_{i+1} = -1$.  These two equilibria, one with the first link pointing in the direction of $e_3$ and the other with the first link pointing in the direction of $-e_3$, are referred to as \textit{folded equilibria} since the chain pendulum is completely folded in each case.  

We introduce binary variables $s_i$ to describe a specific equilibrium. It is defined as
\begin{align}
s_i = 
\begin{cases}
1 &\text{if $q_i=e_3$ (along gravity),}\\
-1 &\text{if $q_i=-e_3$ (opposite to gravity)}
\end{cases}\label{eqn:si}
\end{align}
for $i=1,\ldots,n$. Then, an ordered $n$-tuple $s=(s_1,s_2,\ldots, s_n)$, specifies a particular equilibrium. 

\subsection{Linearized Equations of Motion}

The local dynamics near each equilibrium can be studied using linearization methods. From \refeqn{delqi}, the variation of $q_i$ from any equilibrium configuration can be written as
\begin{align*}
\delta q_i = \xi_i\times e_3\quad\text{or}\quad \delta q_i = \xi_i\times -e_3,
\end{align*}
where $\xi_i\in\Re^3$ with $\xi_i\cdot e_3=0$. The variation of $\omega_i$ is given by $\delta\omega\in\Re^3$ with $\delta\omega_i \cdot e_3=0$. Therefore, the third components of $\xi_i$ and $\delta\omega_i$ for any equilibrium configuration are zero, and they are omitted in the following linearized equation, i.e., the state vector of the linearized equation is composed of $C^T\xi_i\in\Re^2$. As a result, the dimension of the corresponding unconstrained linearized equation is $2n+2$.


\newcommand{\Mb}{\mathbf{M}}
\newcommand{\Kb}{\mathbf{K}}
\newcommand{\Bb}{\mathbf{B}}
\newcommand{\xb}{\mathbf{x}}
\newcommand{\ub}{\mathbf{u}}
\newcommand{\vb}{\mathbf{v}}
\newcommand{\Gb}{\mathbf{G}}

\begin{prop}\label{prop:Lin}
Consider an equilibrium of a chain pendulum on a cart, specified by $s=(s_1,\ldots,s_n)$ and $x=0$. The linearized equation of the Euler-Lagrange equation \refeqn{ELwm} is 
\begin{gather}
\Mb\ddot \xb  + \Gb\xb = \Bb u,\label{eqn:Lin}
\end{gather}
or equivalently 
\begin{align*}
\begin{bmatrix} \Mb_{xx} & \Mb_{xq}\\ \Mb_{qx} & \Mb_{qq} \end{bmatrix}
\begin{bmatrix}  \delta\ddot x \\ \ddot \xb_q\end{bmatrix}
+
\begin{bmatrix} 0_2 & 0_{2\times 2n}\\ 0_{2n\times 2} & \Gb_{qq}\end{bmatrix}
\begin{bmatrix}  \delta x \\ \xb_q\end{bmatrix}
=
\begin{bmatrix} I_2 \\ 0_{2n\times 2}\end{bmatrix}
u,
\end{align*}
where the corresponding sub-matrices are defined as
\begin{align*}
\xb_q & = [C^T \xi_1;\,\ldots\,;\,C^T \xi_n],\\
\Mb_{xx} &= M_{00}I_{2},\\
\Mb_{xq} &= \begin{bmatrix}
-s_1M_{01}\hat e_3C & -s_2M_{02}\hat e_3C & \cdots & -s_nM_{0n}\hat e_3C
\end{bmatrix},\\
\Mb_{qx} & = \Mb_{xq}^T,\\
\Mb_{qq} &=
    \begin{bmatrix}%
M_{11}I_{2} & s_{12}M_{12} I_2 & \cdots & s_{1n}M_{1n}I_2\\%
s_{21}M_{21} I_2 & M_{22} I_{2} & \cdots & s_{2n}M_{2n}I_2\\%
\vdots & \vdots & & \vdots\\
s_{n1}M_{n1}I_2 & s_{n2}M_{n2}I_2 & \cdots & M_{nn} I_{2}
    \end{bmatrix},\\
\Gb_{qq} & = \mathrm{diag}[s_1\sum_{a=1}^n m_a gl_1 I_2,\, \ldots, s_nm_ngl_nI_2],
\end{align*}
where $s_{ij}=s_is_j$ for $i,j=1,\ldots,n$.
\end{prop}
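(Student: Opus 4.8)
The plan is to linearize the angular-velocity form \refeqn{ELwm}--\refeqn{ELwm2} about the equilibrium $q_i=s_ie_3$, $\omega_i=0$, $x=0$, $u=0$, exploiting the fact that every velocity and acceleration vanishes there. Because of this, the linearization decouples into three essentially independent pieces: the equilibrium value of the mass matrix, the gravity gradient, and the input term. The Coriolis/centrifugal terms on the right side of \refeqn{ELwm}, being quadratic in $\omega$, contribute nothing to first order, and the variation of the mass matrix is multiplied by the vanishing equilibrium acceleration and so also drops out.

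First I would fix the parametrization of the variations. From \refeqn{delqi}, $\delta q_i=\xi_i\times q_i=s_i\,\xi_i\times e_3$ with $\xi_i\cdot e_3=0$, and $\delta\omega_i\cdot e_3=0$. The key kinematic identity comes from varying \refeqn{ELwm2}: at the equilibrium $\omega_i=\dot q_i=0$, so $\delta\dot q_i=\delta\omega_i\times q_i$ on one hand and $\delta\dot q_i=\dot\xi_i\times q_i$ on the other, whence $\delta\omega_i=\dot\xi_i$ (both sides being orthogonal to $q_i=s_ie_3$). Consequently $\dot{\delta\omega}_i=\ddot\xi_i$, and since $\xi_i\cdot e_3=0$ we may write $\ddot\xi_i=C\,(C^T\ddot\xi_i)$, identifying the reduced coordinate $\xb_q=[C^T\xi_1;\ldots;C^T\xi_n]$.

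Next I would evaluate the mass matrix of \refeqn{ELwm} at the equilibrium, substituting $\hat q_i=s_i\hat e_3$; as noted, only $\Mb|_{\mathrm{eq}}$ survives, acting on $(\delta\ddot x,\dot{\delta\omega}_i)=(\delta\ddot x,\ddot\xi_i)$. To reduce each link block from $\Re^3$ to $\Re^2$ I would left-multiply the $i$-th block row by $C^T$ and use the readily verified identities $\hat e_3^2C=-C$ and $C^TC=I_2$. For the off-diagonal block this turns $-M_{ij}\hat q_i\hat q_j=-s_{ij}M_{ij}\hat e_3^2$ acting on $\ddot\xi_j=C(C^T\ddot\xi_j)$ into $s_{ij}M_{ij}C(C^T\ddot\xi_j)$, which projects to the block $s_{ij}M_{ij}I_2$ of $\Mb_{qq}$; the diagonal block gives $M_{ii}I_2$, the coupling block $-M_{0i}\hat q_i$ gives $-s_iM_{0i}\hat e_3C$ (the $x$-row needing no projection), and the transpose relation $M_{i0}=M_{0i}^T$ with $\hat e_3^T=-\hat e_3$ yields $\Mb_{qx}=\Mb_{xq}^T$.

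Finally I would linearize the right-hand side. The quadratic terms $\|\omega_j\|^2(\cdots)$ vanish to first order, the control $u$ appears unchanged giving $\Bb=[I_2;0_{2n\times2}]$, and the only stiffness comes from the gravity terms $\bar M_i\hat q_ie_3$ with $\bar M_i=(\sum_{a=i}^n m_a)gl_i$. Varying $\hat q_ie_3=q_i\times e_3$ gives $\delta(\hat q_ie_3)=\delta q_i\times e_3=s_i(\xi_i\times e_3)\times e_3=-s_i\xi_i$, using \refeqn{VTP} and $\xi_i\cdot e_3=0$. Moving this to the left side and projecting by $C^T$ produces exactly $\Gb_{qq}=\mathrm{diag}[s_i\bar M_iI_2]$, while the $x$-equation carries no configuration dependence, so its stiffness blocks vanish. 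Assembling the blocks gives \refeqn{Lin}. The main obstacle is purely bookkeeping: carrying the two descriptions $\xi_i\in\Re^3$ and $C^T\xi_i\in\Re^2$ consistently through the $C^T$-projection and the kinematic identity $\delta\omega_i=\dot\xi_i$; once the equilibrium-velocity simplification is in hand, the remaining algebra is routine.
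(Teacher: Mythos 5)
Your proposal is correct and follows essentially the same route as the paper's proof: exponential-map variations $\delta q_i = \xi_i \times q_i$ with $\xi_i \cdot e_3 = 0$, the kinematic identity $\delta\omega_i = \dot\xi_i$ obtained by varying $\dot q_i = \omega_i \times q_i$ at the equilibrium, substitution into the angular-velocity form of the equations of motion with higher-order terms dropped, and reduction to $\Re^2$ via the projection $C$ using $\hat e_3^2 C = -C$ and $C^T C = I_2$. The only difference is organizational: the paper linearizes explicitly at the hanging equilibrium and then asserts that the remaining $2^n - 1$ cases follow by the same procedure with appropriate sign flips, whereas you carry the general signs $s_i$ through the computation from the start, which in effect supplies the detail of the generalization step that the paper leaves as a remark.
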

\begin{proof}
See Appendix \ref{sec:pfLin}.
\end{proof}

The local eigenstructure near each equilibrium can be determined from the linearized dynamics. The eigenvalues of \refeqn{Lin} are the roots of $\mathrm{det}[\lambda^2 \Mb + \Gb]=0$. Note that there are zero eigenvalues since the first two columns and rows of $\Gb$ are zero. These correspond to the cart dynamics.

For the hanging equilibrium, given by $s_i=1$ for all $i=1,\ldots,n$, the matrix $\Gb_{qq}$ becomes positive-definite. This implies that $\lambda^2 \leq 0$ always. Then, the stability of the nonlinear dynamics \refeqn{ELwm} is inconclusive from the linearized equation \refeqn{Lin} as $\mathrm{Re}[\lambda]=0$. But, it can be shown that the hanging equilibrium is stable in the sense of Lyapunov modulo the cart position, using the total energy as a Lyapunov function: the hanging equilibrium is the local minimum of the total energy, which is conserved along the solution of the uncontrolled system. For each of the other ${2^n}-1$ equilibrium configurations, there exist a pair of positive and negative eigenvalues, which implies that it is an unstable saddle. 
%
%
Heteroclinic connections between the various equilibria provide a non-local characterization of the dynamic flow.   

\section{Control Analysis}\label{sec:Ctrl}


Various control problems can be posed for the chain pendulum on a cart system.   For example, feedback control might be used to achieve asymptotic stabilization of any of the natural equilibrium solutions. From the linearized equation \refeqn{Lin}, we find a criterion for controllability as follows.
\begin{prop}\label{prop:Ctrl}
Consider an equilibrium of a chain pendulum on a cart, specified by $s=(s_1,\ldots,s_n)$ and $x=0$. Suppose that the inertia matrix $\Mb$ of the linearized equation \refeqn{Lin} is positive-definite. Then, the equilibrium is controllable if and only if the following subsystem is controllable
\begin{align}
\Mb_{qq} \ddot \xb_q + \Gb_{qq} = \Mb_{xq} \mathbf{u} \label{eqn:Ctrlxq}
\end{align}
for a control input $\ub\in\Re^2$, or equivalently 
\begin{align}
\mathrm{rank}[\lambda^2 \Mb_{qq} + \Gb_{qq},\; \Mb_{qx}]=2n\label{eqn:Ctrl}
\end{align}
for any $\lambda\in\mathbb{C}$. 
\end{prop}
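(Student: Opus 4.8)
The plan is to pass to first-order state-space form, apply the Popov--Belevitch--Hautus (PBH) rank test, and then exploit the block structure of $\Mb$, $\Gb$, and $\Bb$ to collapse the test onto the $\xb_q$-block. Since $\Mb$ is positive-definite, hence invertible, \refeqn{Lin} is equivalent to $\ddot\xb=-\Mb^{-1}\Gb\,\xb+\Mb^{-1}\Bb\,u$, which in first-order form is controllable if and only if the associated PBH matrix has full rank $2(2n+2)$ for every $\lambda\in\mathbb{C}$. A one-line block reduction---add $\lambda$ times the top block row to the bottom block row, then clear using the identity block in the augmented dynamics---collapses this to the standard second-order criterion: the equilibrium is controllable if and only if $\mathrm{rank}[\lambda^2\Mb+\Gb,\;\Bb]=2n+2$ for every $\lambda\in\mathbb{C}$, the invertible factor $\Mb^{-1}$ dropping out of the rank.

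Next I would insert the explicit blocks. Because $\Bb=[I_2;\,0_{2n\times 2}]$ and $\Gb$ has upper-left block $0_2$, the matrix $[\lambda^2\Mb+\Gb,\;\Bb]$ carries the identity $I_2$ in its top-right corner with zeros beneath it. Using those two columns to clear the entire first block row by column operations leaves a matrix whose nonzero content is $\left[\begin{smallmatrix}0 & 0 & I_2\\ \lambda^2\Mb_{qx} & \lambda^2\Mb_{qq}+\Gb_{qq} & 0\end{smallmatrix}\right]$, so that $\mathrm{rank}[\lambda^2\Mb+\Gb,\;\Bb]=2+\mathrm{rank}[\lambda^2\Mb_{qq}+\Gb_{qq},\;\lambda^2\Mb_{qx}]$. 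Controllability of the equilibrium is therefore equivalent to $\mathrm{rank}[\lambda^2\Mb_{qq}+\Gb_{qq},\;\lambda^2\Mb_{qx}]=2n$ for every $\lambda\in\mathbb{C}$.

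It remains to replace $\lambda^2\Mb_{qx}$ by $\Mb_{qx}$ and to identify the result as the PBH test for the reduced system. For $\lambda\neq0$ the last two columns may be divided by the nonzero scalar $\lambda^2$ without changing the rank, yielding $\mathrm{rank}[\lambda^2\Mb_{qq}+\Gb_{qq},\;\Mb_{qx}]=2n$, which is \refeqn{Ctrl}. The value $\lambda=0$ I would treat separately: since $\Gb_{qq}=\mathrm{diag}[s_1\sum_{a=1}^n m_agl_1 I_2,\ldots,s_nm_ngl_nI_2]$ has only nonzero diagonal entries it is invertible, so $\mathrm{rank}[\Gb_{qq},\;\Mb_{qx}]=2n$ holds automatically; correspondingly $\mathrm{rank}[\Gb,\;\Bb]=2+\mathrm{rank}\,\Gb_{qq}=2n+2$, so the equilibrium is automatically controllable at $\lambda=0$ (the directly-forced cart modes are never an obstruction). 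Hence \refeqn{Ctrl} holding for all $\lambda\in\mathbb{C}$ is equivalent to controllability of the equilibrium. Finally, $\Mb_{qq}$ is a principal submatrix of the positive-definite $\Mb$ and so is itself positive-definite; applying the same second-order PBH criterion to \refeqn{Ctrlxq} shows that \refeqn{Ctrl} is exactly the controllability condition for that reduced subsystem, completing the stated equivalence.

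The one delicate point I anticipate is the interplay between the $\lambda^2\Mb_{qx}$ produced by the reduction and the $\Mb_{qx}$ appearing in \refeqn{Ctrl}, together with the bookkeeping at $\lambda=0$. The entire argument turns on $\Gb_{qq}$ being invertible, which makes the $\lambda=0$ case vacuous and lets the $\lambda^{-2}$ column rescaling run freely for $\lambda\neq0$; apart from this observation the proof is routine rank-preserving column manipulation.
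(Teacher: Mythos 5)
Your proof is correct, and it arrives at the paper's own intermediate criterion $\mathrm{rank}[\lambda^2\Mb+\Gb,\;\Bb]=2n+2$ by deriving it from the first-order PBH test, a step the paper instead outsources to the references on second-order controllability. Where you genuinely diverge is in how that criterion is collapsed onto the pendulum block. The paper first converts the rank test into a left-eigenvector test (no generalized left eigenvector of $(\Mb,\Gb)$ may be orthogonal to $\Bb$), then argues that any obstructing eigenvector $\vb_i=[\vb_x;\vb_q]$ must have $\vb_x=0$ by the form of $\Bb$, that invertibility of $\Gb_{qq}$ forces $\lambda_i\neq 0$, and that the surviving equations $\vb_q^T(\lambda_i^2\Mb_{qq}+\Gb_{qq})=0$, $\vb_q^T\Mb_{qx}=0$ are precisely an obstruction to \refeqn{Ctrl}, the converse being obtained by padding $\vb_q$ with zeros. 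You instead perform rank-preserving column operations directly, obtaining the identity $\mathrm{rank}[\lambda^2\Mb+\Gb,\;\Bb]=2+\mathrm{rank}[\lambda^2\Mb_{qq}+\Gb_{qq},\;\lambda^2\Mb_{qx}]$, and then remove the factor $\lambda^2$ on $\Mb_{qx}$ by column scaling for $\lambda\neq0$ and by invertibility of $\Gb_{qq}$ for $\lambda=0$. The two arguments are dual (rank deficiency versus nontrivial left null space) and hinge on exactly the same two structural facts, the form of $\Bb$ and the invertibility of $\Gb_{qq}$; but your route avoids having to establish the rank-test/eigenvector-test equivalence that occupies the first half of the paper's proof, and it makes explicit a point the paper leaves implicit: identifying \refeqn{Ctrl} as the controllability test for the subsystem \refeqn{Ctrlxq} requires $\Mb_{qq}$ to be invertible, which you correctly obtain as a principal submatrix of the positive-definite $\Mb$.
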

\begin{proof}
See Appendix \ref{sec:pfCtrl}.
\end{proof}
This proposition states that the controllability of \refeqn{Lin} is equivalent to the controllability of a reduced system given by \refeqn{Ctrlxq}. It represents the dynamics of the chain pendulum, without the cart, where the input matrix is given by $\Mb_{qx}$ that corresponds to the inertia coupling between the chain dynamics and the cart dynamics. 

If the linearized equation about a specific equilibrium configuration is controllable, then we can design a control system to asymptotically stabilize that equilibrium.

\begin{prop}\label{prop:AS}
Consider an equilibrium of a chain pendulum on a cart, specified by $s=(s_1,\ldots,s_n)$ and $x=0$. Assume that the corresponding linearized equation is controllable (as characterized by Proposition \ref{prop:Ctrl}). The control force $u$ is chosen as follows:
\begin{align}
u = -K_x x - K_{\dot x} \dot x - \sum_{i=1}^n \{K_{q_i} C^T(s_ie_3\times q_i) + K_{\omega_i} C^T\omega_i\},\label{eqn:u}
\end{align}
for controller gains $K_x,K_{\dot x},K_{q_i},K_{\omega_i}\in\Re^{2\times 2}$ for $i=1,\ldots, n$. Then, there exist values of the controller gains such that the equilibrium of the controlled system is locally asymptotically stable. 
\end{prop}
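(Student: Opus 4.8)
The plan is to invoke the principle of linearized stability. I will show that the proposed feedback \refeqn{u}, when linearized about the prescribed equilibrium, reduces to a full-state proportional--derivative feedback whose gain matrix is \emph{completely free}. Controllability of the linearization (Proposition~\ref{prop:Ctrl}) then permits arbitrary eigenvalue assignment by state feedback, and in particular a choice of gains rendering the linearized closed loop Hurwitz; local asymptotic stability of the nonlinear closed loop follows from Lyapunov's indirect method. A preliminary step is to confirm that the feedback vanishes at the equilibrium so the controlled and uncontrolled equilibria coincide.

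First I would check that \refeqn{u} vanishes at the equilibrium: there $x=0$, $\dot x=0$, $\omega_i=0$, and $q_i=s_ie_3$, so $s_ie_3\times q_i=s_i^2\,(e_3\times e_3)=0$, whence $u=0$ and the equilibrium of \refeqn{ELwm} is preserved. Next I would linearize each term. Using $\delta q_i=\xi_i\times q_i$ with $q_i=s_ie_3$ together with \refeqn{VTP}, the restoring term varies as
\begin{align*}
\delta(s_ie_3\times q_i)=s_ie_3\times(\xi_i\times s_ie_3)=e_3\times(\xi_i\times e_3)=\xi_i,
\end{align*}
since $\xi_i\cdot e_3=0$; thus $C^T\delta(s_ie_3\times q_i)=C^T\xi_i$, independent of $s_i$. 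Linearizing the kinematics $\dot q_i=\omega_i\times q_i$ about $\omega_i=0$ gives $(\delta\omega_i-\dot\xi_i)\times e_3=0$, and since both vectors are orthogonal to $e_3$ this forces $\delta\omega_i=\dot\xi_i$, hence $C^T\delta\omega_i=\dot{(C^T\xi_i)}$. Therefore the linearized feedback is
\begin{align*}
\delta u=-K_x\,\delta x-K_{\dot x}\,\delta\dot x-\sum_{i=1}^n\braces{K_{q_i}C^T\xi_i+K_{\omega_i}\dot{(C^T\xi_i)}},
\end{align*}
which, in terms of the linearized state $\xb=[\delta x;\,\xb_q]$ of Proposition~\ref{prop:Lin}, is exactly $\delta u=-\Kb_p\xb-\Kb_d\dot\xb$ with $\Kb_p=[K_x,\,K_{q_1},\ldots,K_{q_n}]$ and $\Kb_d=[K_{\dot x},\,K_{\omega_1},\ldots,K_{\omega_n}]$. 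As each block gain is an arbitrary element of $\Re^{2\times2}$, the combined gain $[\Kb_p\ \ \Kb_d]$ ranges over \emph{all} $2\times(4n+4)$ real matrices.

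With $\Mb$ positive-definite (as required for the controllability characterization of Proposition~\ref{prop:Ctrl}), I would write \refeqn{Lin} in first-order form with state $z=[\xb;\,\dot\xb]$, obtaining $\dot z=Az+B\,\delta u$, where $(A,B)$ is controllable precisely because \refeqn{Lin} is. Closing the loop yields $\dot z=(A-B[\Kb_p\ \ \Kb_d])z$. Since the feedback gain is a completely free $2\times(4n+4)$ matrix and $(A,B)$ is controllable, the MIMO pole-placement theorem supplies gains assigning the $4n+4$ closed-loop eigenvalues to any prescribed self-conjugate set; choosing them in the open left half-plane makes $A-B[\Kb_p\ \ \Kb_d]$ Hurwitz. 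Finally, because $u$ enters \refeqn{ELwm} only through the additive term and the smooth feedback vanishes at the equilibrium, the linearization of the closed-loop nonlinear system is exactly this Hurwitz system, so Lyapunov's indirect method gives local asymptotic stability.

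The main obstacle is the first step: establishing cleanly that \refeqn{u} linearizes to a genuinely full-state feedback with no loss of degrees of freedom, i.e.\ that the projections $C^T(s_ie_3\times q_i)$ and $C^T\omega_i$ recover the full linearized configuration and velocity variables $C^T\xi_i$ and $\dot{(C^T\xi_i)}$. The computation above settles this, and notably the $s_i$-dependence cancels, so the \emph{same} feedback structure stabilizes every one of the $2^n$ equilibria. The remaining ingredients --- MIMO eigenvalue assignment under controllability and the principle of linearized stability --- are standard.
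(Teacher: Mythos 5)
Your proposal is correct and follows essentially the same route as the paper's proof: linearize the feedback \refeqn{u} about the equilibrium to obtain a full-state proportional--derivative feedback $-\Kb_\xb\xb-\Kb_{\dot\xb}\dot\xb$ on the linearized system \refeqn{Lin}, invoke controllability to choose stabilizing gains, and conclude local asymptotic stability of the nonlinear closed loop by Lyapunov's indirect method. The only difference is one of detail: you explicitly carry out the computation $\delta(s_ie_3\times q_i)=\xi_i$ (showing the $s_i$-dependence cancels) and verify $\delta\omega_i=\dot\xi_i$, steps the paper compresses into citations of \refeqn{delqi} and \refeqn{dotxii}.
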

\begin{proof}
See Appendix \ref{sec:pfAS}.
\end{proof}
This control system can be used uniformly to stabilize any of $2^n$ equilibrium configurations of a chain pendulum on a cart. But, as it is based on the linearized dynamics, the region of attraction could be limited. 

\section{Numerical Examples}\label{sec:NE}

In the subsequent numerical simulations, we consider a chain pendulum model with five identical links, i.e. $n=5$. This system has twelve degrees of freedom with two force control inputs on the cart. The properties of the cart and the links are chosen as
\begin{align*}
m=0.5\,\mathrm{kg},\quad m_i=0.1\,\mathrm{kg},\; l_i=0.1\,\mathrm{m}\;\text{ for $i=1,\ldots,5$}.
\end{align*} 
Throughout this section, the following units are used: $\mathrm{kg},\mathrm{m},\mathrm{sec}$ and $\mathrm{rad}$, unless otherwise specified. 

\begin{figure}
\centerline{
	\subfigure[Cart position $x$]{
		\includegraphics[width=0.5\columnwidth]{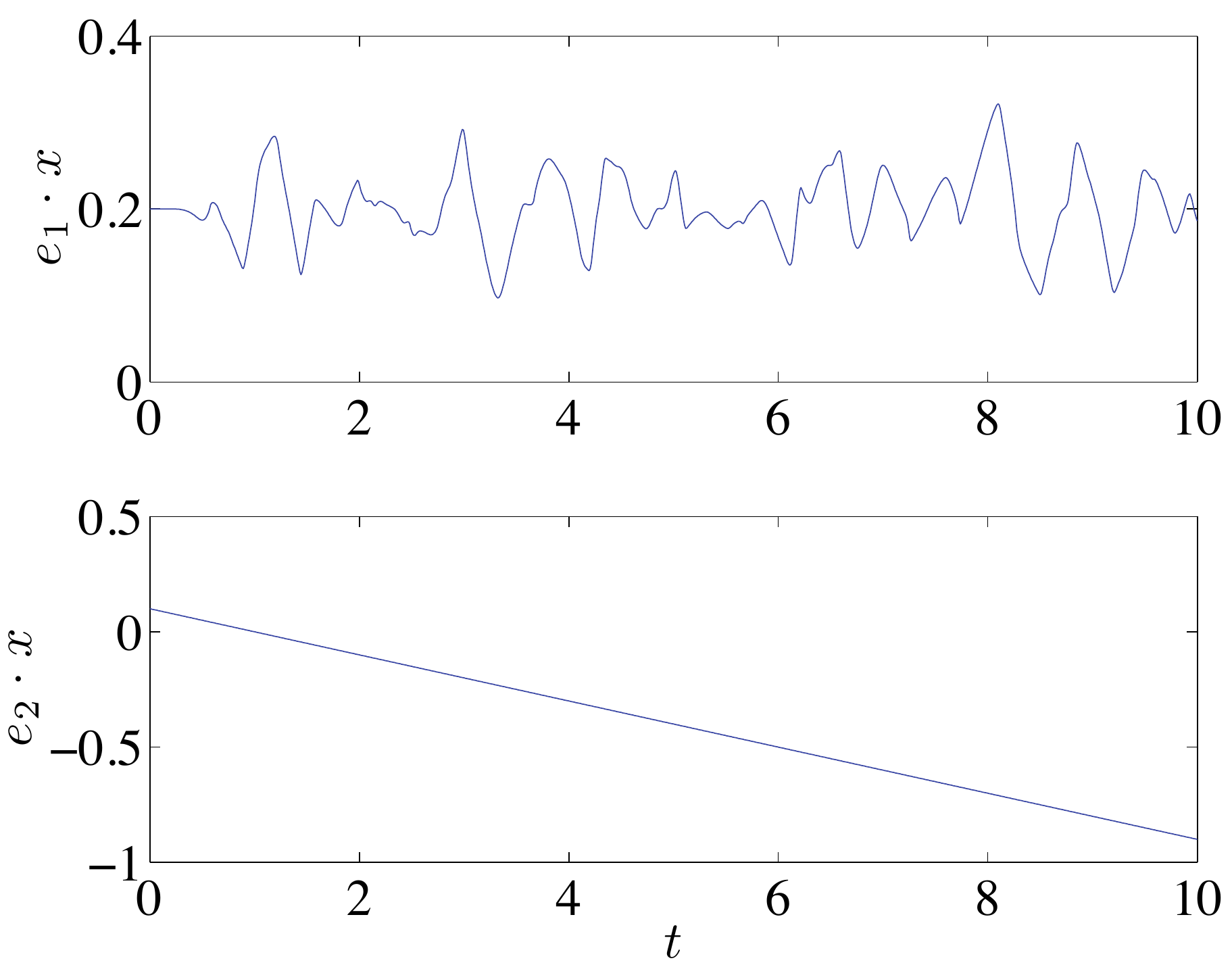}}
	\hfill
	\subfigure[Third elements of $q_3,q_4,q_5$]{
		\includegraphics[width=0.49\columnwidth]{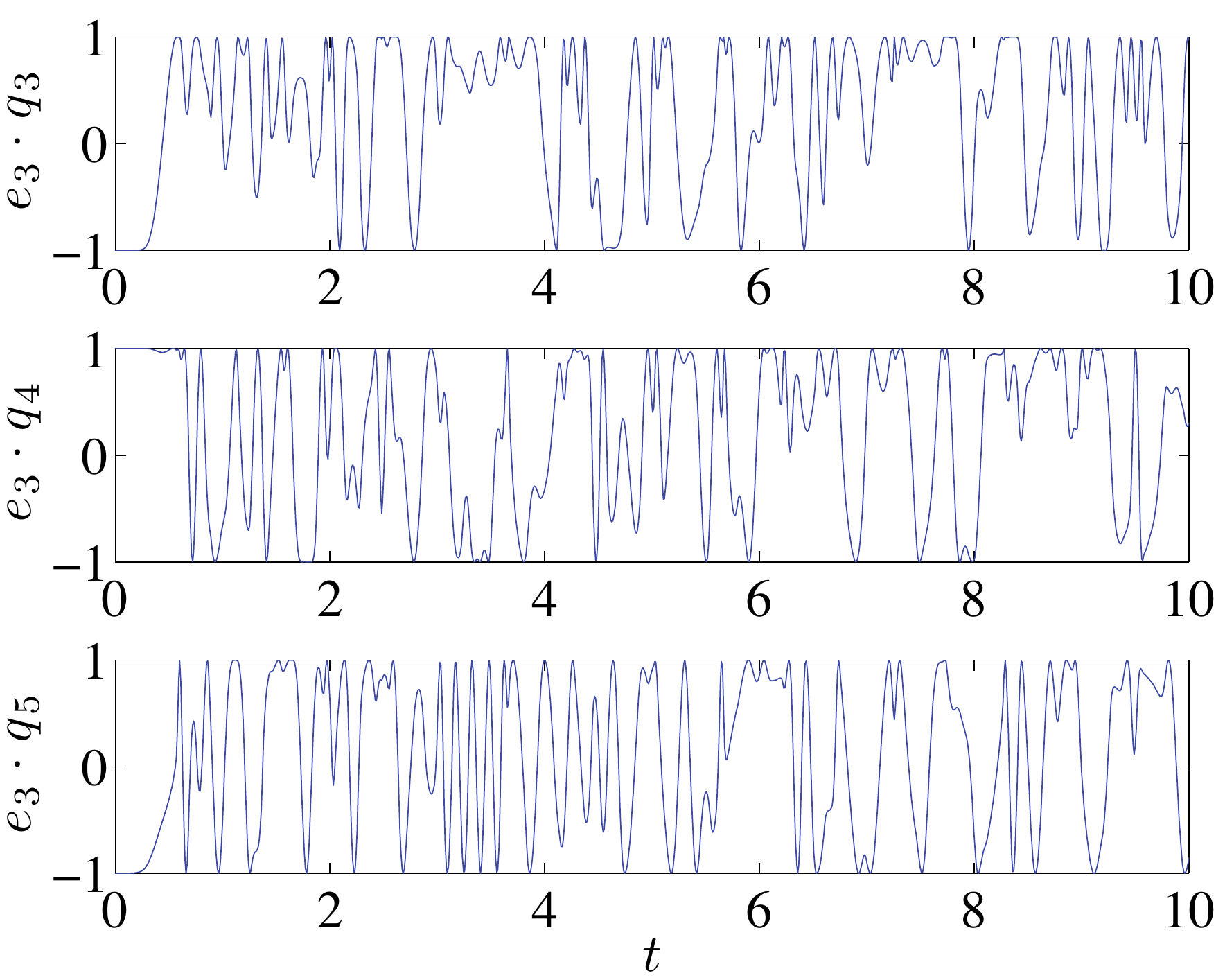}}
}
\centerline{
	\subfigure[Location of $m_5$ with respect to the cart ($\sum_{i=1}^5 l_i q_i$) in the $e_1e_3$ plane]{
		\includegraphics[width=0.49\columnwidth]{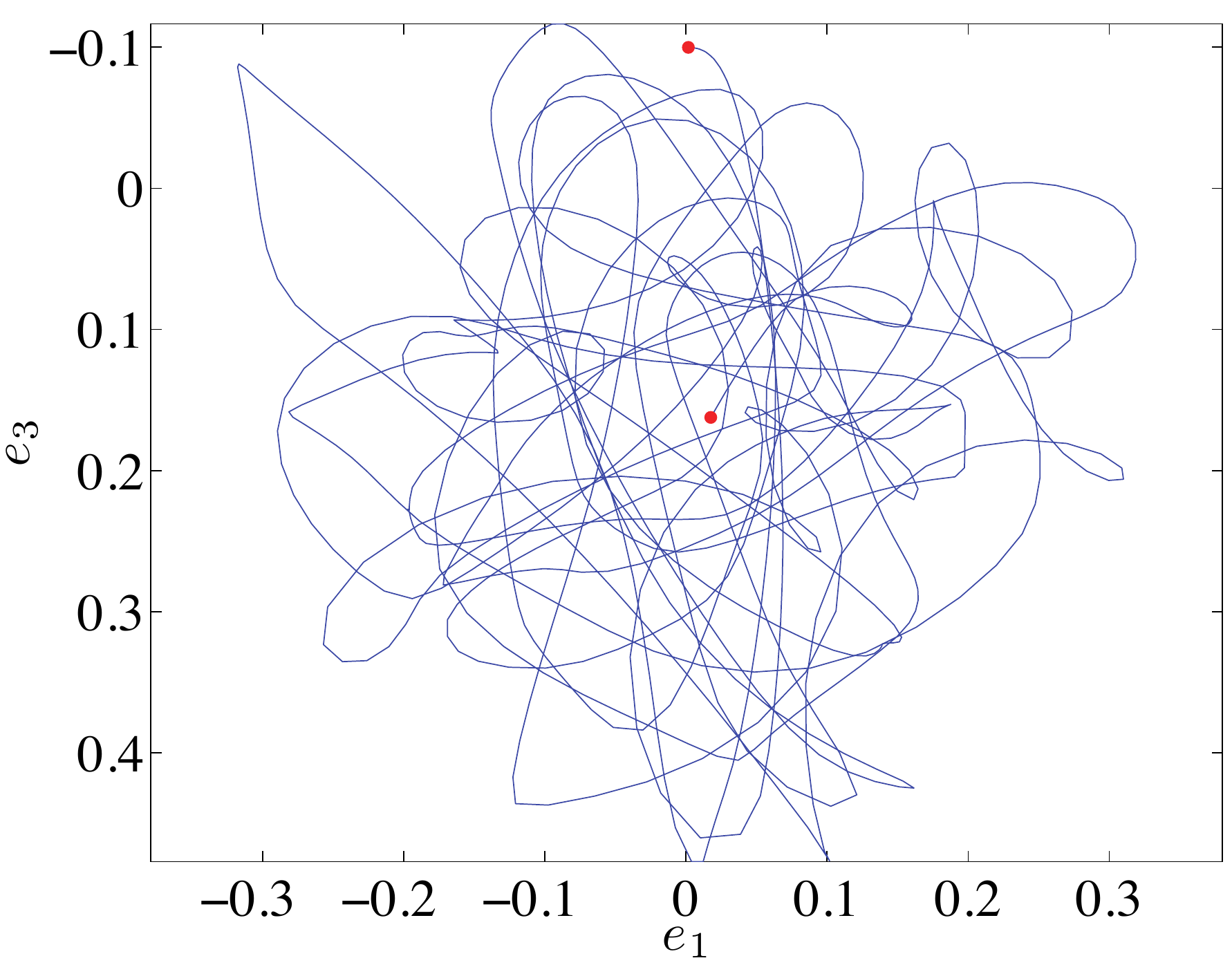}\label{fig:1c}}
	\hspace*{0.1cm}
	\subfigure[Kinetic energy $T$ ($T_q$ in \refeqn{T2}: red,dotted) and potential energy $V$]{
		\includegraphics[width=0.49\columnwidth]{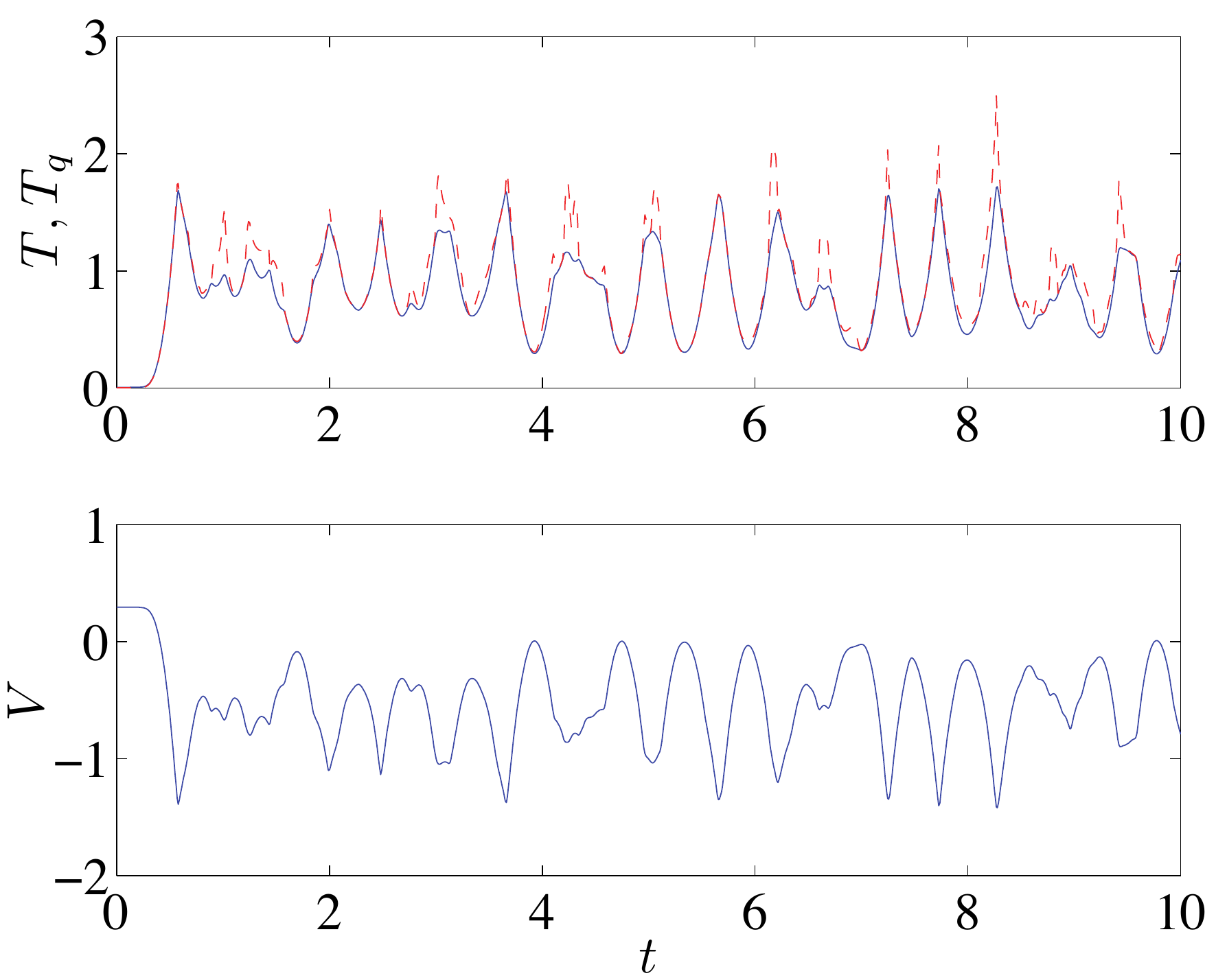}\label{fig:1d}}
}
\caption{Uncontrolled response: perturbation from a folded equilibrium}\label{fig:1}
\end{figure}

First, simulation results for the uncontrolled chain pendulum on a cart system are presented. The initial condition is a small perturbation from one of the completely folded equilibria, given by $s=(-1,1,-1,1,-1)$. More specifically,
\begin{gather}
\begin{gathered}
x(0)=[0.2;0.1],\quad \dot x(0)=[0;-0.1],\\
q_i(0)=s_ie_3 = (-1)^ie_3\text{ for $i=1,\ldots,4$,}\\
q_5(0)=[\sin 1^\circ;0;-\cos 1^\circ],\\
\omega_i(0)=0_{3\times 1}\text{ for $i=1,\ldots,5$},
\end{gathered}\label{eqn:IC1}
\end{gather}
where the fifth link is perturbed by $1^\circ$ from the equilibrium. The corresponding simulation results are shown at Figure \ref{fig:1}. The given initial condition guarantees that the relative motion of the links with respect to the cart always lies in the $e_1e_3$ plane, which is depicted in Figure \ref{fig:1c}. Figure \ref{fig:1d} demonstrates the complex transfer of potential energy and kinetic energy between the links and the cart as the chain pendulum on a cart dynamics evolve.

\begin{figure}
\centerline{
	\subfigure[Cart position $x$]{
		\includegraphics[width=0.5\columnwidth]{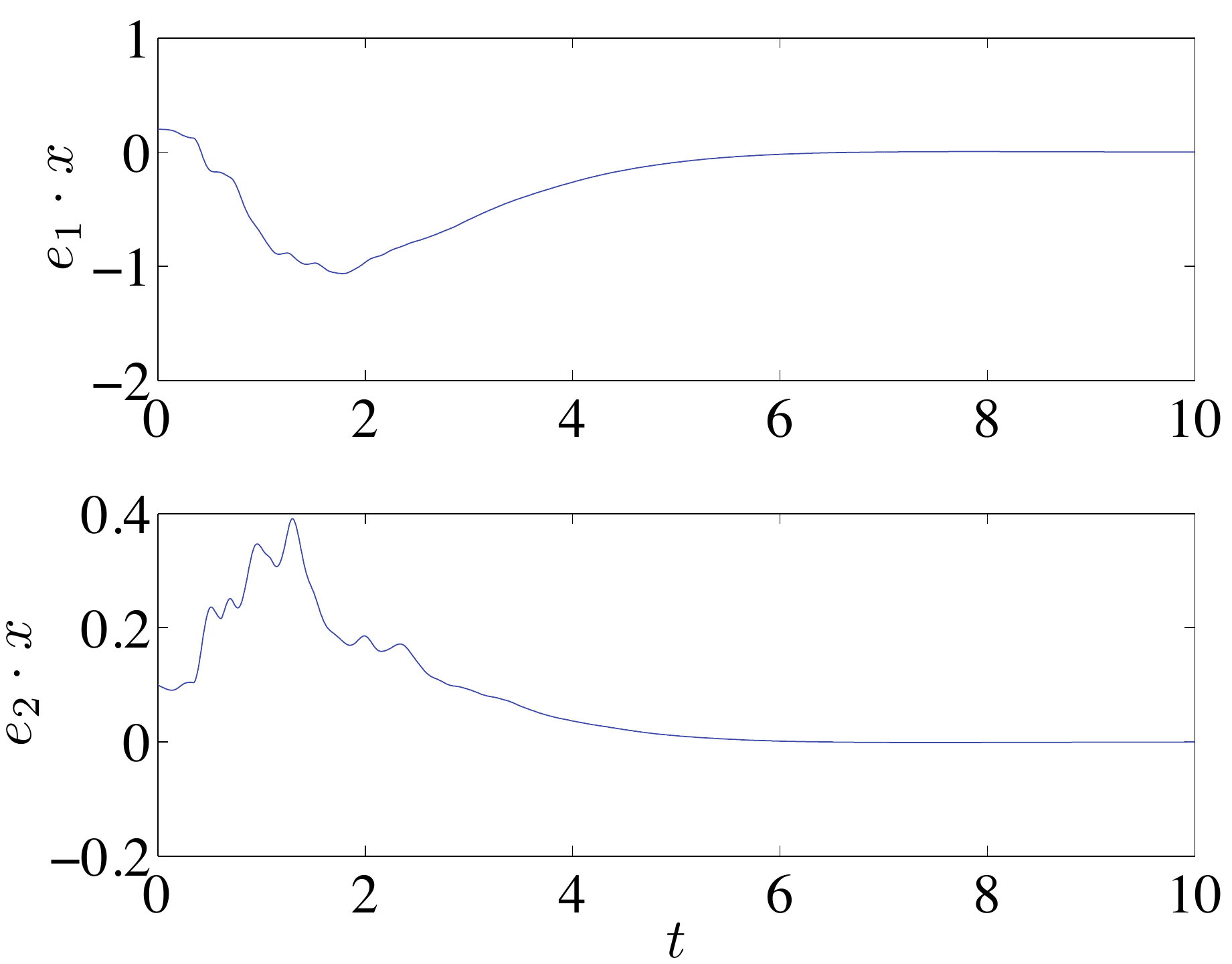}}
	\hfill
	\subfigure[Direction error $e_q$ and angular velocity error $e_\omega$ for links]{
		\includegraphics[width=0.495\columnwidth]{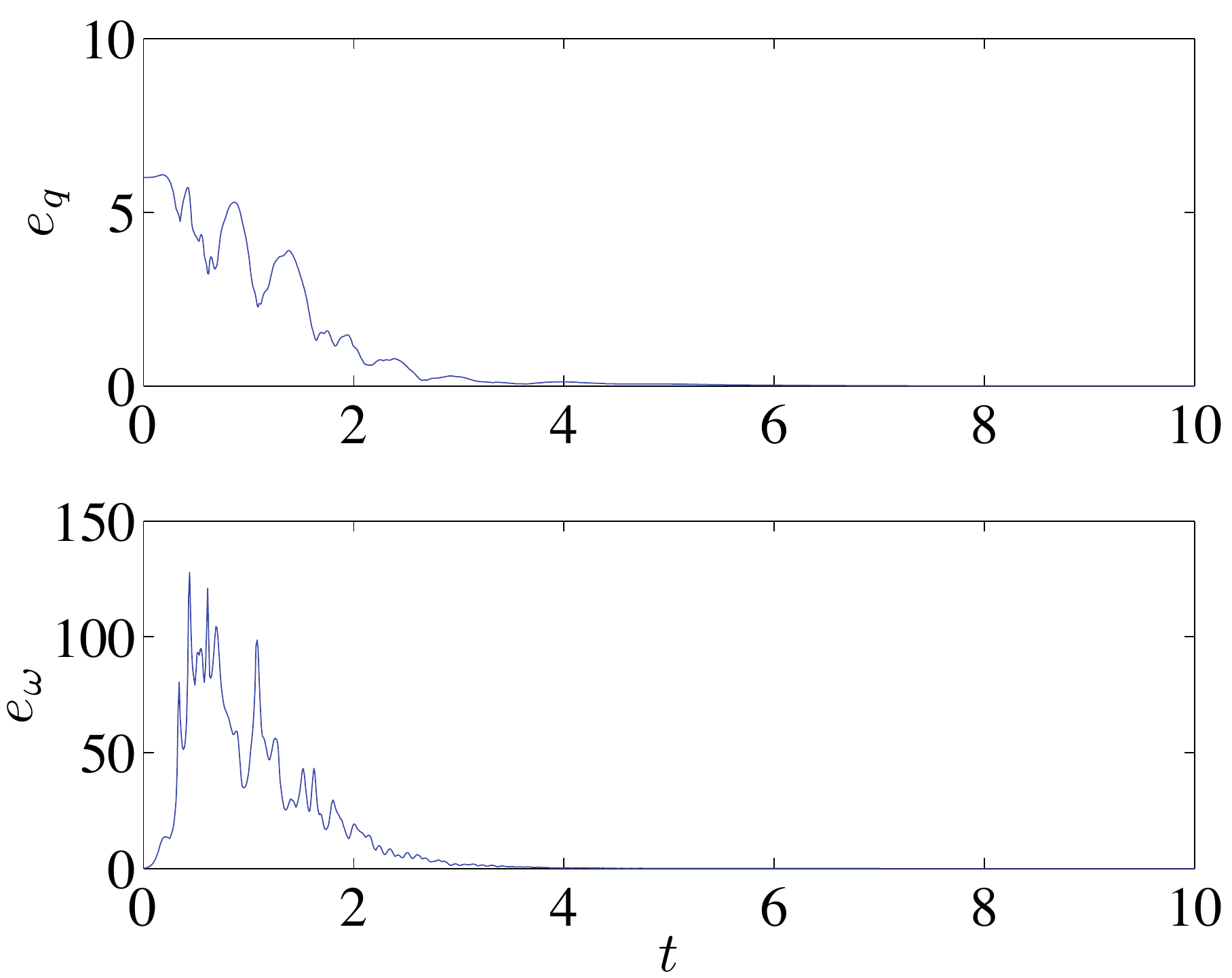}}
}
\centerline{
	\hspace*{0.2cm}
	\subfigure[Location of $m_5$ with respect to the cart ($\sum_{i=1}^5 l_i q_i$)]{
		\includegraphics[width=0.4\columnwidth]{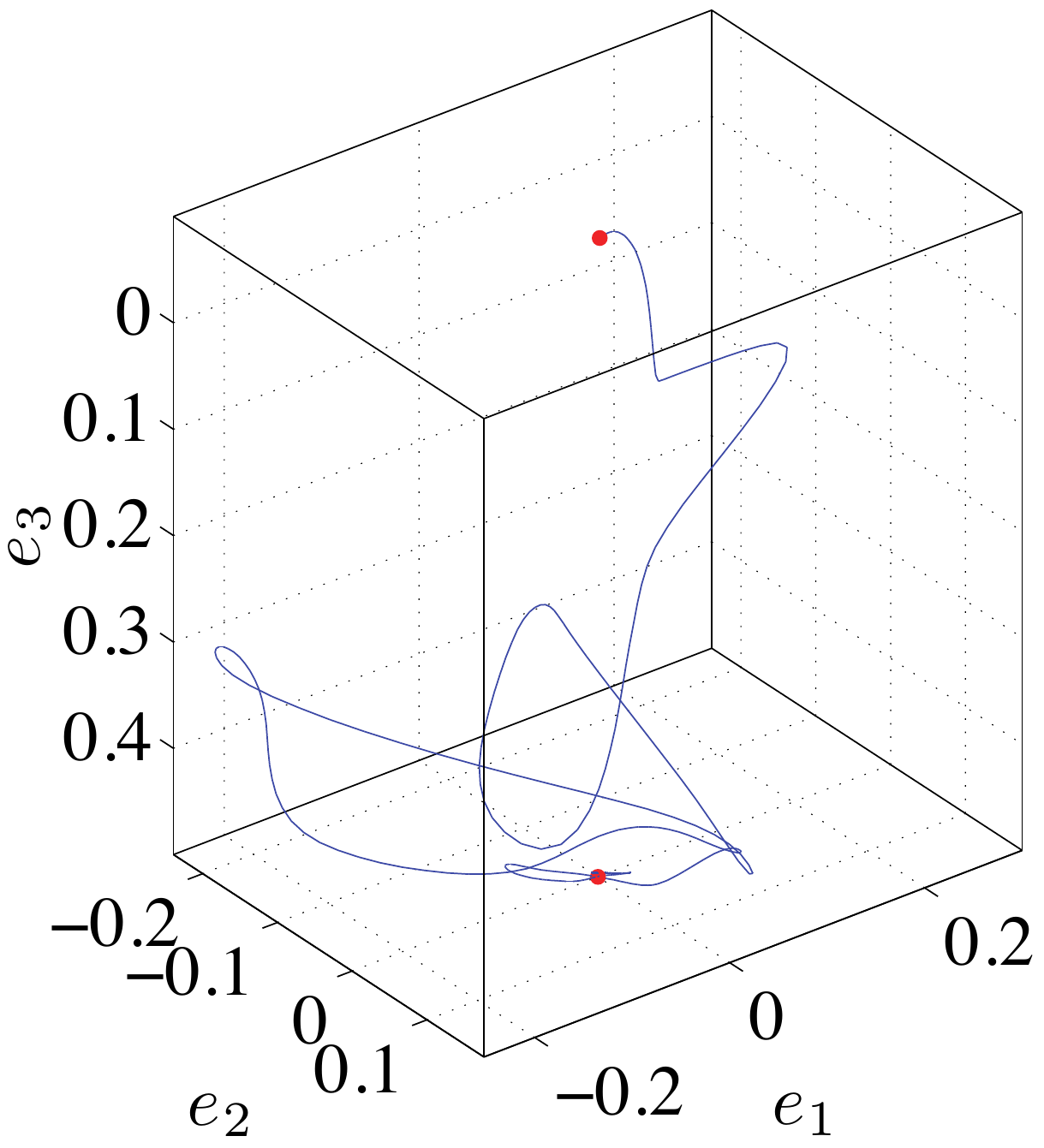}}
	\hspace*{0.4cm}
	\subfigure[Control force $u$]{
		\includegraphics[width=0.49\columnwidth]{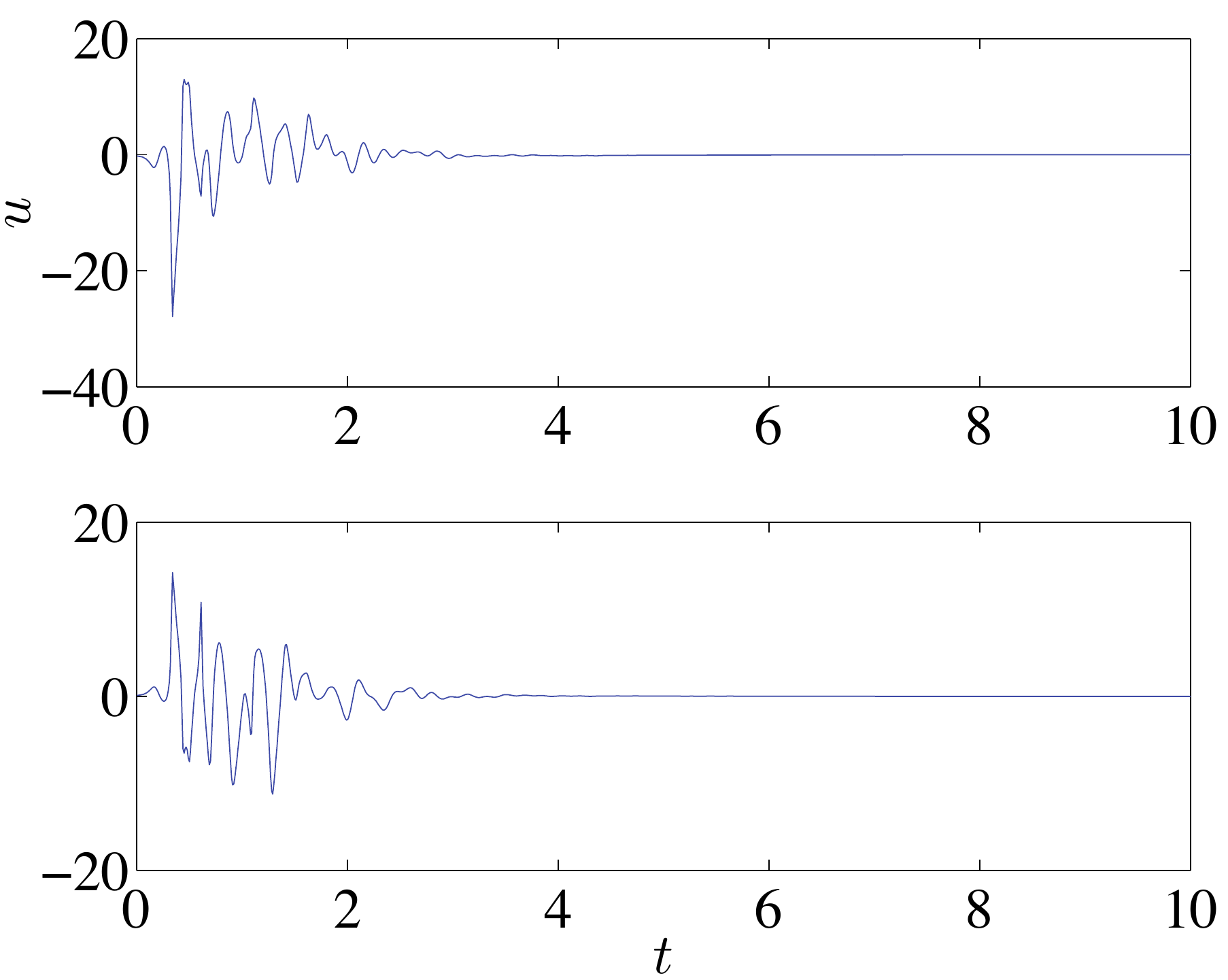}}
}
\caption{Controlled response: asymptotic stabilization of the hanging equilibrium $s=\{1,1,1,1,1\}$}\label{fig:2}
\end{figure}

\begin{figure}
\centerline{
	\subfigure[Cart position $x$]{
		\includegraphics[width=0.49\columnwidth]{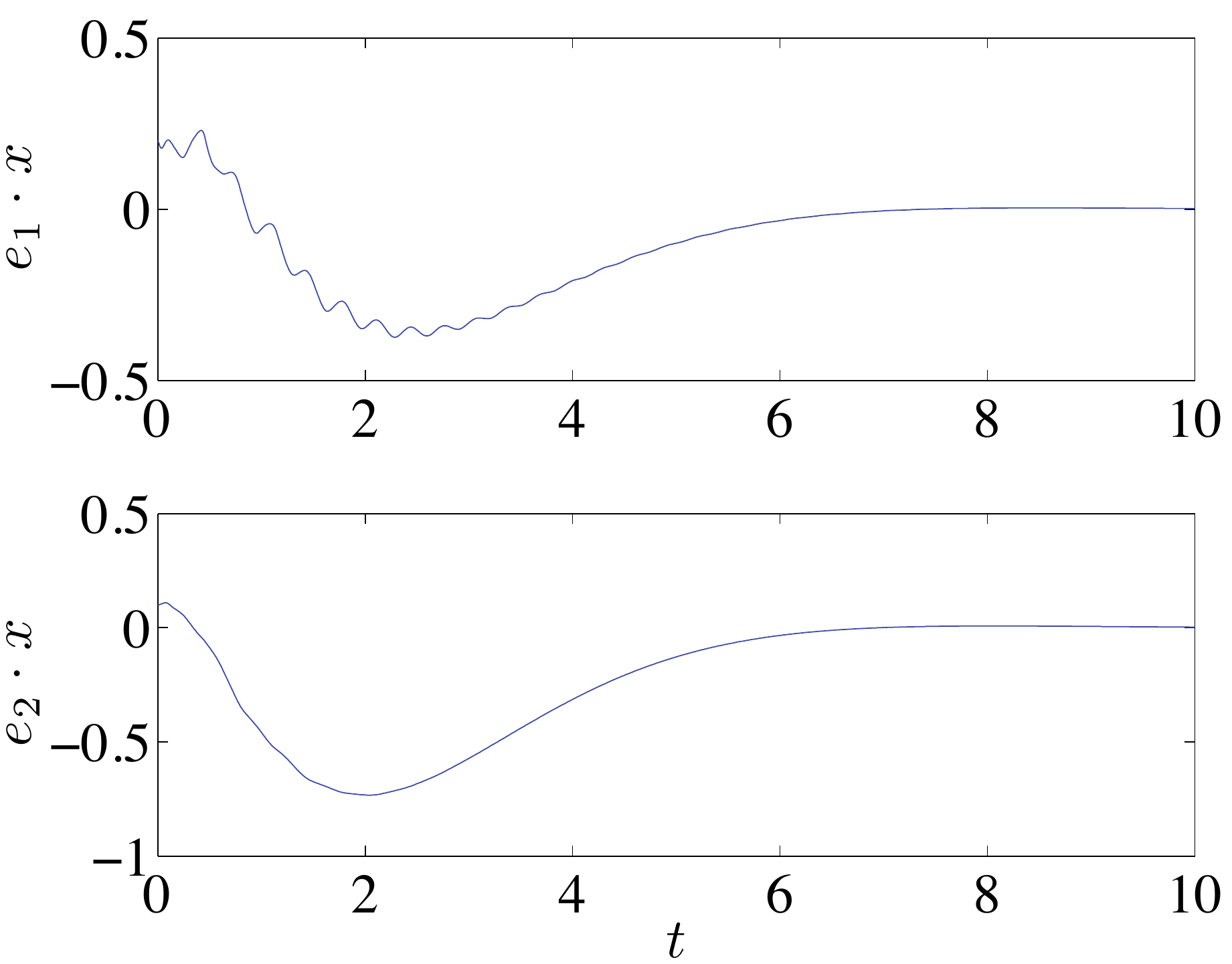}}
	\hfill
	\subfigure[Direction error $e_q$ and angular velocity error $e_\omega$ for links]{
		\includegraphics[width=0.475\columnwidth]{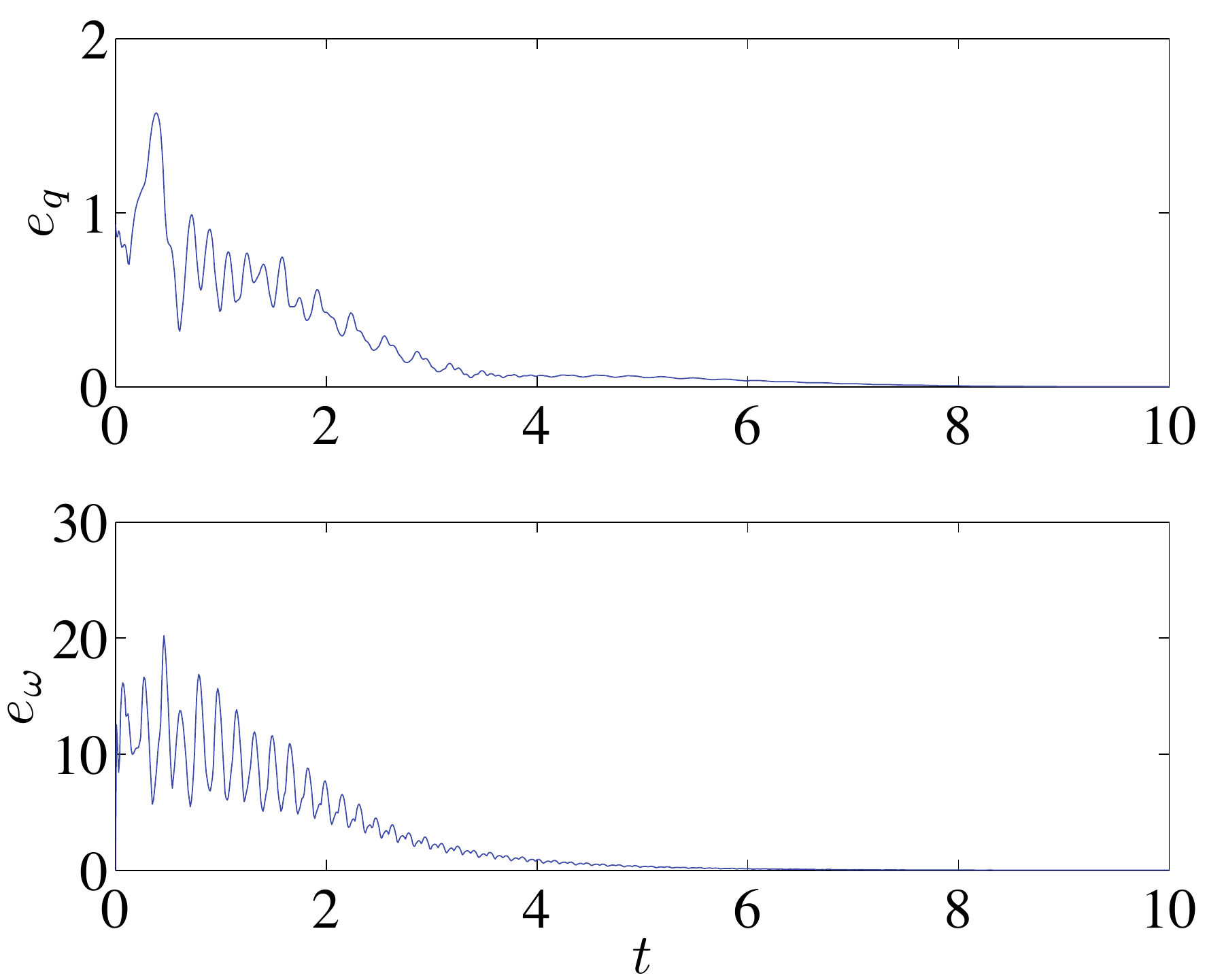}}
}
\centerline{
	\hspace*{0.1cm}
	\subfigure[Location of $m_5$ with respect to the cart ($\sum_{i=1}^5 l_i q_i$)]{
		\includegraphics[width=0.42\columnwidth]{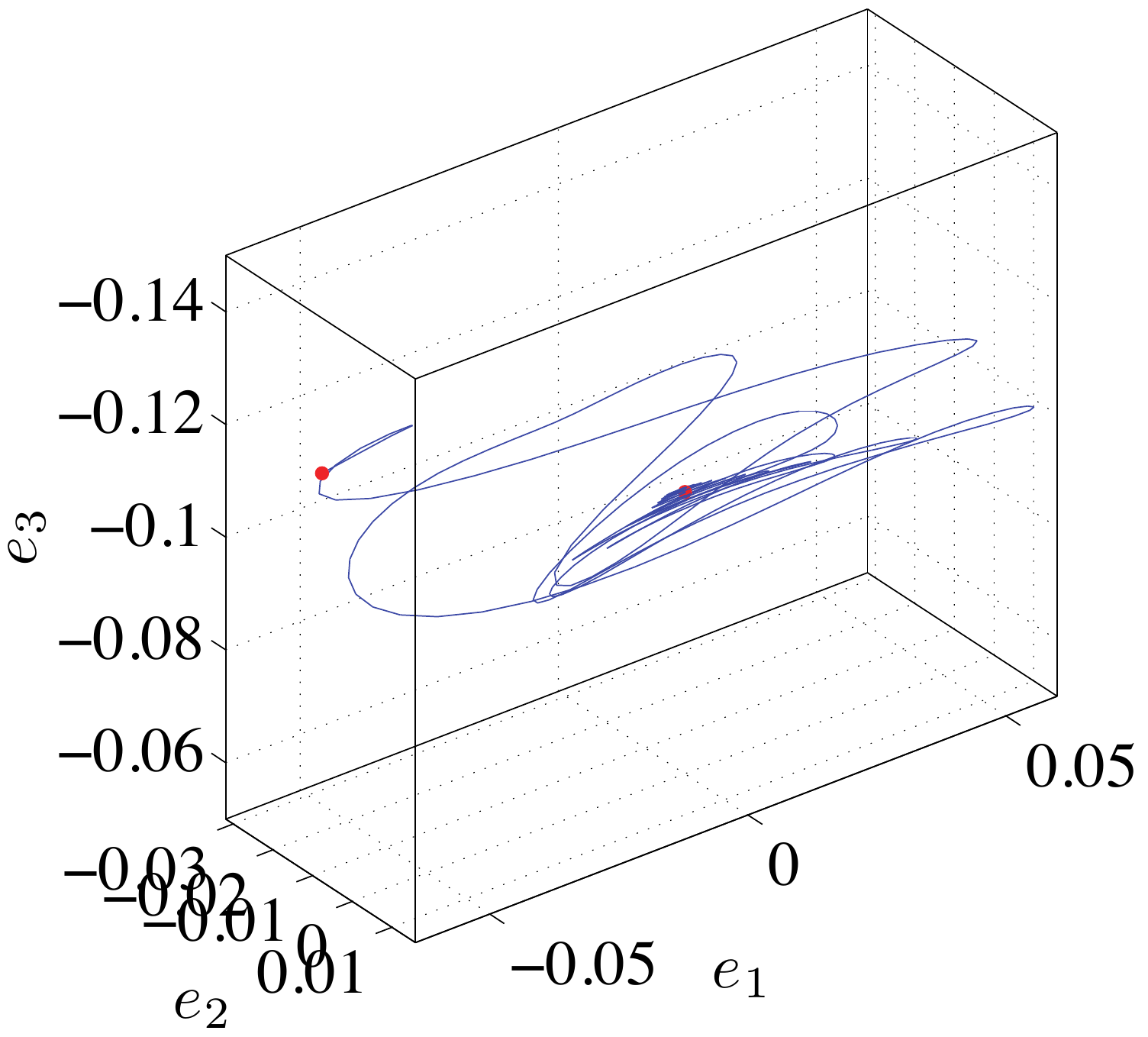}}
	\hspace*{0.2cm}
	\subfigure[Control force $u$]{
		\includegraphics[width=0.48\columnwidth]{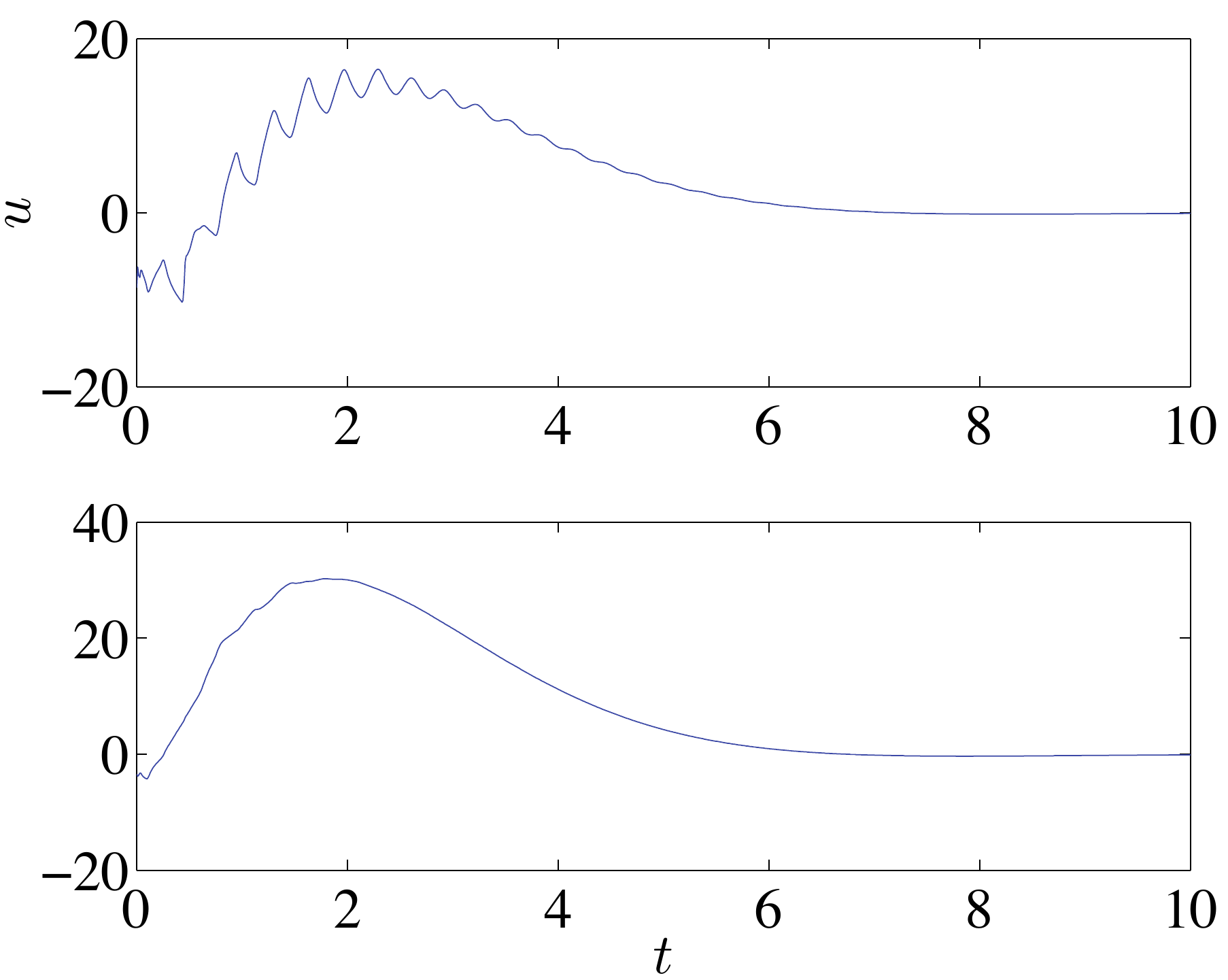}}
}
\caption{Controlled response: asymptotic stabilization of a partially-folded equilibrium $s=\{-1,-1,-1,1,1\}$}\label{fig:3}
\end{figure}

Second, simulation results are presented that show the response of the chain pendulum on a cart system to a feedback control \refeqn{u} that stabilizes the hanging equilibrium $s=(1,1,1,1,1)$. The initial conditions are same as \refeqn{IC1}, and the controller gains are chosen from a linear quadratic regulator with the weighting matrices $Q=\mathrm{diag}[8I_2,I_{2n},8I_2,I_{2n}]$ and $R=I_2$. We define the following variables that measure the direction errors and the angular velocity errors of links:
\begin{align*}
e_q = \sum_{i=1}^n \|q_i-s_ie_i\|,\quad e_\omega = \sum_{i=1}^n \|\omega_i\|.
\end{align*}
Figure \ref{fig:2} illustrates that the chain pendulum on a cart asymptotically approaches the hanging equilibrium. 

Next, we consider the control system \refeqn{u} to stabilize a partially folded equilibrium given by $s=(-1,-1,-1,1,1)$, i.e., the first three links are opposite to gravity, and the remaining last two links are aligned with gravity. Initial conditions are given as follows:
\begin{gather*}
q_1(0)=-[\sin6^\circ;0;\cos 6^\circ],\\
q_2(0)=q_3(0)=-[0;\sin4^\circ;0;\cos 4^\circ],\\
q_4(0)=[\sin5^\circ\cos4^\circ;-\sin5^\circ\sin4^\circ;\cos5^\circ],\\
q_5(0)=[-\sin35^\circ;0;\cos 35^\circ].
\end{gather*}
Other initial conditions for $x(0),\dot x(0)$ and $\omega_i(0)$ are identical to \refeqn{IC1}. Controller gains are chosen from a linear quadratic regulator with the weighting matrices $Q=\mathrm{diag}[I_2,8I_{2n},I_2,8I_{2n}]$ and $R=I_2$. Figure \ref{fig:3} illustrates that the cart and the pendulum asymptotically converge to the equilibrium $s=(-1,-1,-1,1,1)$.



\section{Conclusions}
Euler-Lagrange equations that evolve on $(\Sph^2)^n \times \Re^2$ have been derived for the chain pendulum on a cart system.   These equations of motion provide a remarkably compact form of the equations of motion, which enables us to analyze their dynamic properties and control systems uniformly for an arbitrary number of the links, and globally for any configuration of the links. 


We emphasize that modeling, analysis, and computations can be carried out directly in terms of a geometric coordinate-free framework as illustrated by the chain pendulum on a cart system studied in this paper; there is no need ever to use local angle coordinates.  This important fact is not appreciated by many researchers in dynamics and control who continue to formulate many dynamics and control problems in local coordinates, which could otherwise be analyzed with greater ease in a coordinate-free framework.

\appendices
\appendix
\renewcommand{\thesubsubsection}{\thesubsection.\arabic{subsubsection}}

\subsection{Proof of Proposition \ref{prop:EL}}\label{sec:pfEL}


The variations of the Lagrangian with respect to $x$ and $\dot x$ are given by
\begin{align*}
\D_x L \cdot \delta x& = 0,\\
\D_{\dot x} L \cdot \delta \dot x& = (M_{00} \dot x + \sum_{i=1}^n M_{0i}\dot q_i) \cdot \delta \dot x,
\end{align*}
where $\D_x L$ represents the derivative of $L$ with respect to $x$. From \refeqn{delqi}, the variation of $q_i$ is $\delta q_i =\xi_i\times q_i$ for $\xi_i\in\Re^3$ with $\xi_i\cdot q_i=0$.
The variation of the Lagrangian with respect to $q_i$ is given by
\begin{align*}
\D_{q_i}L\cdot \delta q_i & =  \sum_{a=i}^n m_a gl_i e_3 \cdot (\xi_i\times q_i) = -\sum_{a=i}^n m_a gl_i\hat e_3 q_i \cdot \xi_i,
\end{align*}
where \refeqn{STP} has been used. The variation of $\dot q_i$ is given by
\begin{align*}
\delta \dot q_i = \dot \xi_i \times q_i + \xi_\times \dot q_i.
\end{align*}
From this and \refeqn{STP}, the variation of the Lagrangian with respect to $\dot q_i$ is given by
\begin{align*}
&\D_{\dot q_i}L\cdot \delta \dot q_i  
 = (M_{i0}\dot x + \sum_{j=1}^n M_{ij}\dot q_j) \cdot (\dot \xi_i \times q + \xi_i \times \dot q_i)\\
& = \hat q_i (M_{i0}\dot x + \sum_{j=1}^n M_{ij}\dot q_j)\cdot \dot\xi_i +
\hat{\dot q}_i (M_{i0}\dot x + \sum_{j=1}^n M_{ij}\dot q_j)\cdot \xi_i.
\end{align*}


Using these expressions and integrating by parts, the variation of the action integral can be written as
\begin{align*}
\delta \mathfrak{G} &= \int_{t_0}^{t_f} -\{M_{00} \ddot x + \sum_{i=1}^n M_{0i}\ddot q_i\}\cdot \delta x \\
&+ \sum_{i=1}^n\{-\hat q_i (M_{i0}\ddot x + \sum_{j=1}^n M_{ij}\ddot q_j)-\sum_{a=i}^n m_a gl_i\hat e_3 q_i \}\cdot \xi_i \,dt.
\end{align*}

According to the Lagrange-d'Alembert principle, the sum of the variation of the action integral, and the integral of the virtual work done by the control force on the cart, namely $\int_{t_0}^{t_f} {u \delta x}\,dt$, is zero. This implies that the expression within the first pair of braces in the above equation is equal to $-u$, and the expression within the second pair of braces is parallel to $q_i$ for any $1\leq i\leq n$, as $\xi_i$ is perpendicular to $q_i$. Therefore, we obtain
\begin{gather}
M_{00} \ddot x + \sum_{i=1}^n M_{0i}\ddot q_i =u,\label{eqn:xddot0}\\
-\hat q_i^2 (M_{i0}\ddot x + \sum_{j=1}^n M_{ij}\ddot q_j)+\sum_{a=i}^n m_a gl_i\hat q_i^2 e_3=0.\label{eqn:qddot0}
\end{gather}
Equation \refeqn{qddot0} is rewritten to obtain an explicit expression for $\ddot q_i$. As $q_i\cdot \dot q_i =0$, we have $\dot q_i \cdot \dot q_i +q_i\cdot \ddot q_i=0$. Using this and \refeqn{VTP}, we have
\begin{align*}
-\hat q_i^2 \ddot q_i = -(q_i\cdot \ddot q_i )q_i + (q_i\cdot q_i)\ddot q_i =(\dot q_i \cdot \dot q_i) q_i + \ddot q_i.
\end{align*}
Substituting this into \refeqn{qddot0},
\begin{align}
M_{ii} & \ddot q_i  -\hat q_i^2 (M_{i0}\ddot x + \sum_{\substack{j=1\\j\neq i}}^n M_{ij}\ddot q_j)\nonumber\\
&=- M_{ii}\|\dot q_i\|^2 q_i-\sum_{a=i}^n m_a gl_i\hat q_i^2 e_3.\label{eqn:qddot1}
\end{align}
These equations \refeqn{xddot0} and \refeqn{qddot1} are rewritten in a matrix form to obtain \refeqn{ELm}.

These can also be rewritten in terms of the angular velocities. Since $\dot q_i = \omega_i\times q_i$ for the angular velocity $\omega_i$ satisfying $q_i\cdot\omega_i=0$, we have
\begin{align*}
    \ddot q_i & = \dot \omega_i \times q_i + \omega_i\times (\omega_i\times q_i) 
    =-\hat q_i \dot\omega_i - \|\omega_i\|^2q_i.
\end{align*}
Substituting this into \refeqn{xddot0} and \refeqn{qddot1}, and using the fact that $\dot\omega_i\cdot q_i=0$, we obtain \refeqn{ELwm}.

\subsection{Proof of Proposition \ref{prop:Lin}}\label{sec:pfLin}

Consider the hanging equilibrium where $s=(1,1,\ldots,1)$, and $x=0$. The variations from the hanging equilibrium are
\begin{align*}
x^\epsilon = \epsilon\delta x,\quad \dot x^\epsilon = \epsilon\delta\dot x,\quad
q_i^\epsilon = \exp(\epsilon \hat \xi_i)e_3,\quad \omega_i^\epsilon = \epsilon\delta\omega_i,
\end{align*}
where $\delta x,\delta\dot x\in\Re^2$, and $\xi_i,\delta\omega_i\in\Re^3$ with $\xi_i\cdot e_3=0$ and $\delta\omega_i\cdot e_3=0$. This yields the following infinitesimal variation $\delta q_i = \xi_i\times e_3$. From \refeqn{dotqi}, $\delta\dot q_i$ is given by
\begin{align*}
\delta \dot q_i = \dot\xi_i \times e_3 =\delta\omega_i \times e_3 + 0\times (\xi_i\times e_3)=\delta\omega_i \times e_3.
\end{align*}
Since both sides of the above equation is perpendicular to $e_3$, this is equivalent to $e_3\times(\dot\xi_i\times e_3) = e_3\times(\delta\omega_i\times e_3)$, which yields
\begin{gather*}
\dot \xi - (e_3\cdot\dot\xi) e_3 = \delta\omega_i -(e_3\cdot\delta\omega_i)e_3.
\end{gather*}
Since $\xi_i\cdot e_3 =0$, we have $\dot\xi\cdot e_3=0$. As $e_3\cdot\delta\omega_i=0$ from the constraint, we obtain the linearized equation for \refeqn{dotqi}:
\begin{align}
\dot\xi_i = \delta\omega_i.\label{eqn:dotxii}
\end{align}

Substituting these into \refeqn{ELwm}, and ignoring the higher order terms, we obtain
\begin{align}
&    \begin{bmatrix}%
    M_{00}I_{2\times 2} & -M_{01}\hat e_3C & -M_{02}\hat e_3C & \cdots & -M_{0n}\hat e_3C\\
    C^T\hat e_3 M_{10} & M_{11}I_{2} & M_{12} I_2 & \cdots & M_{1n}I_2\\%
    C^T\hat e_3 M_{20} &M_{21} I_2 & M_{22} I_{2} & \cdots & M_{2n}I_2\\%
    \vdots & \vdots & \vdots & & \vdots\\
    C^T\hat e_3 M_{n0} &M_{n1}I_2 & M_{n2}I_2 & \cdots & M_{nn} I_{2}
    \end{bmatrix}\nonumber\\
&\qquad\qquad \times     
    \begin{bmatrix}
    \delta\ddot x \\ C^T\ddot \xi_1 \\ C^T\ddot \xi_2 \\ \vdots \\ C^T\ddot \xi_n
    \end{bmatrix}
   =
    \begin{bmatrix}
    u\\
    -\sum_{a=1}^n m_a gl_1\xi'_1\\
    -\sum_{a=2}^n m_a gl_2\xi'_2\\
    \vdots\\
    -m_n gl_n\hat \xi'_n\\
    \end{bmatrix},\label{eqn:Lin0}
\end{align}
where we have used the fact that $\hat e_3^2 = \mathrm{diag}[-1,-1,0]$, $C^T\hat e_3^2 C=-I_2$ and $\hat e_3 C C^T = \hat e_3$. This is the linearized equation about the hanging equilibrium. 

This analysis can be easily generalized to other equilibria, where one or more links are aligned opposite to the gravity. Consider the equilibrium where only the $i$-th link is pointing upward, i.e. $q_i=e_3$ and $q_j=-e_3$ for all $j\neq i$. By following the same procedure, we obtain the same form of the linearized equation as \refeqn{Lin0}, where all of the terms related to $M_{ij}$, $M_{ji}$ and $l_i$ for all $j\neq i$ are multiplied by $-1$. This yields \refeqn{Lin}.

\subsection{Proof of Proposition \ref{prop:Ctrl}}\label{sec:pfCtrl}

Suppose that $\Mb$ is invertible. It is well-known that the linearized system \refeqn{Lin} is controllable, if and only if 
\begin{align}
\mathrm{rank}[\lambda^2 \Mb + \Gb,\; \mathbf{B}]=2n+2\label{eqn:Ctrl0}
\end{align}
for any generalized eigenvalue $\lambda$ satisfying $\mathrm{det}[\lambda^2 \Mb + \Gb]=0$~(see \cite{HugSkeAJAM80,LauArnITAC84}). This is a generalization of the Popov-Belevitch-Hautus (PBH) eigenvalue test to a second-order system. While it is not explicitly stated in the above references~\cite{HugSkeAJAM80,LauArnITAC84}, it is straightforward to find an equivalent condition in terms of eigenvectors, which is similar to the PBH eigenvector test. 

We claim that \textit{\refeqn{Ctrl0} holds if and only if there is no generalized left eigenvector that is orthogonal to $\Bb$, i.e. for any non-zero eigenvector $\vb_i\in\Re^{2n+2}$ satisfying $\vb_i^T(\lambda_i^2 \Mb  + \Gb) =0$, we have $\vb_i^T \Bb \neq 0_{1\times 2}$.} The proof is as follows:\\
\noindent(\textit{Sufficiency}) Suppose that there is a generalized eigenvector $\vb_i$ that is orthogonal to $\Bb$. Left-multiplying \refeqn{Ctrl0} by $\vb_i$ yields
\begin{align*}
\vb_i^T [\lambda^2 \Mb + \Gb,\; \mathbf{B}]= [ \vb_i^T(\lambda^2 \Mb  + \Gb),\, \vb_i^T\Bb],
\end{align*}
which becomes $[0_{1\times 2n+2},\,0_{1\times 2}]$ when $\lambda=\lambda_i$. Therefore, the matrix given in \refeqn{Ctrl0} has linearly dependent rows, which implies that it is rank-deficient.\\
\noindent(\textit{Necessity}) If the matrix given in \refeqn{Ctrl0} is rank-deficient for $\lambda_i$, there exists a vector $\vb_i$ satisfying 
\begin{align*}
\vb_i^T [\lambda_i^2 \Mb + \Gb,\; \mathbf{B}]=[ ((\lambda^2 \Mb  + \Gb) \vb_i)^T,\, \vb_i^T\Bb]=[0_{1\times 2n+4}],
\end{align*}
which implies that $\vb_i$ is a generalized eigenvector that is orthogonal to $\Bb$.

Using this eigenvector test, we show that \refeqn{Ctrl} implies \refeqn{Ctrl0}. More specifically, we show that if \refeqn{Ctrl0} is false, then \refeqn{Ctrl} is false. Suppose that there exists a generalized eigenvector $\vb_i=[\vb_x;\vb_q]$ that is orthogonal to $\Bb$. Then, we have $\vb_i^T \Bb = \vb_x^T I_2 + \vb_q^T 0_{2n\times 2}=\vb_x^T=0_{1\times 2}$ from the definition of $\Bb$ in \refeqn{Lin}. As $\vb_i$ is the left eigenvector, we also have
\begin{align}
\vb_i^T & [\lambda_i^2\Mb+\Gb]  =[0_{2\times 1}^T,\, \vb_q^T] \begin{bmatrix} \lambda_i^2\Mb_x & \lambda_i^2\Mb_{xq}\\ \lambda_i^2\Mb_{qx} & \lambda_i^2\Mb_{qq}+\Gb_{qq}\end{bmatrix}\nonumber\\
& =
\begin{bmatrix} \lambda_i^2\vb_q^T\Mb_{qx}
& \vb_q^T(\lambda_i^2\Mb_{qq}+\Gb_{qq})\end{bmatrix}\nonumber\\
& = \begin{bmatrix} 0_{1\times 2} & 0_{1\times 2n}\end{bmatrix}.\label{eqn:Ctrl2}
\end{align}
When $\lambda_i=0$, this yields $\vb_q^T\Gb_{qq}=0_{1\times 2n}\Rightarrow \vb_q=0_{2n\times 1}$ as $\Gb_{qq}$ is invertible. This is not possible since it contradicts the fact that $\vb_i=[0_{2\times 1};\vb_q]\neq 0_{2n+2\times 1}$. Therefore, $\lambda_i\neq 0$. Then, \refeqn{Ctrl2} implies
\begin{align}
\vb_q^T(\lambda_i^2\Mb_{qq}+\Gb_{qq}) = 0_{1\times 2n},\quad
\vb_q^T\Mb_{qx} = 0_{1\times 2},\label{eqn:Ctrl3}
\end{align}
which states that there exists a non-zero generalized left eigenvector of \refeqn{Ctrlxq} that is orthogonal to $\Mb_{qx}$. Therefore \refeqn{Ctrl} is false.

As a last step, we show \refeqn{Ctrl0} implies \refeqn{Ctrl}. If \refeqn{Ctrl} is false, there exists a non-zero eigenvector $\vb_q$ satisfying \refeqn{Ctrl3}. Let $\vb_i=[0_{2\times 1};\vb_q]$. Then, it is orthogonal to $\Bb$ as $\vb_i^T \Bb=0_{1\times 2}$. And $\vb_i$ is a generalized left eigenvector of $(\Mb,\Gb)$ as it satisfies \refeqn{Ctrl2}. In short, \refeqn{Ctrl} is equivalent to \refeqn{Ctrl0}.

\subsection{Proof of Proposition \ref{prop:AS}}\label{sec:pfAS}

\renewcommand{\Kb}{\mathbf{K}}


Using \refeqn{delqi}, \refeqn{dotxii}, the linearized control input is given by
\begin{align*}
u & = -K_x \delta x - K_{\dot x} \delta \dot x  - \sum_{i=1}^n \{K_{q_i} C^T\xi_i + K_{\omega_i} C^T\delta\omega_i\}.
\end{align*}
From the definition of the state vector $\xb=[\delta x; C^T\xi_1;\ldots;C^T\xi_n]$ in \refeqn{Lin}, $u$ can be written as
\begin{align*}
u = -\Kb_\xb \xb - \Kb_{\dot \xb} \dot\xb, 
\end{align*}
where $\Kb_\xb=[K_x,K_{q_i},\ldots, K_{q_n}]\in\Re^{2\times 2n+2}$ and $\Kb_{\dot \xb}=[K_{\dot x},K_{\omega_1},\ldots, K_{\omega_n}]\in\Re^{2\times 2n+2}$. Since \refeqn{Lin} is controllable, we can choose the controller gains $\Kb_{\xb},\Kb_{\dot \xb}$ such that the equilibrium is asymptotically stable for the linearized equation \refeqn{Lin}. According to Theorem 4.7 in \cite{Kha02}, the equilibrium becomes asymptotically stable for the nonlinear Euler-Lagrange equation \refeqn{ELwm}.

\vfill
\bibliography{CDC12.1}
\bibliographystyle{IEEEtran}

\end{document}